\author{Guillaume Bulteau}
\address{Institut Montpelliérain Alexander Grothendieck (Imag)\\ UMR CNRS 5149 - Université Montpellier 2\\
Case courrier 051\\
34095 Montpellier cedex 5 - France
}
\email{guillaume.bulteau@ac-montpellier.fr}
\title{Géométrie systolique et technique de régularisation}
\renewcommand{\baselinestretch}{1.2}
\def\R{\text{\normalfont{I\hspace{-.15em}R}}}
\def\N{\text{\normalfont{I\hspace{-.15em}N}}}
\newcommand {\Z} {\mathbb{Z}}
\renewcommand{\dfrac}{\displaystyle \frac}
\newcommand{\dint}{\displaystyle \int}
\newcommand{\dsum}{\displaystyle \sum}
\newcommand{\norm}[1]{\left|\!\left| #1 \right|\!\right|}
\renewcommand{\mod}[1]{\left| #1 \right|}
\renewcommand{\le}{\leqslant}
\renewcommand{\ge}{\geqslant}
\newcommand {\eps} {\varepsilon}
\newcommand{\sys}{\mbox{\normalfont{\text{syst}}}}
\newcommand {\vol}{\mbox{\normalfont{\text{vol}}}}
\newcommand {\Long}{\mbox{\normalfont{long}}}
\newcommand{\dist}{\normalfont{\text{dist}}}
\newcommand {\id}{\normalfont{\text{Id}}}
\newcommand{\quot}[2]{\raise0.7ex\hbox{$#1$} \!\mathord{\left/
 {\vphantom { {n}}}\right.\kern-\nulldelimiterspace}
\!\lower0.7ex\hbox{$#2$}}
\newcommand {\K} {K(\pi,1)}
\renewcommand{\d}{\normalfont{\text{d}}}
\newcommand{\eq}{\begin{eqnarray*}}
\newcommand{\fineq}{\end{eqnarray*}}
\newcommand{\w}{\widetilde}
\newcommand \set[1]{ \left\{#1\right\}}
\newcommand {\im}{\mbox{\normalfont{Im}}}
\newcommand{\et}{\ \ \text{et}\ \ }
\newcommand {\emb}{\mbox{\normalfont{emb}}}
\newcommand {\aire}{\mbox{\normalfont{aire}}}
\newcommand {\ex}{\mbox{e}^}
\newcommand{\cupp}{\smallsmile}
\newcommand{\capp}{\smallfrown}
\newcommand{\rg}{\normalfont{\text{rg}}}
\newcommand{\trans}{{}^t\!}
\newcommand {\inj}{\mbox{\normalfont{inj}}}
\theoremstyle{plain}
\newtheorem{theo}{Théorème}
\newtheorem{lemm}{Lemme}
\theoremstyle{remark}
\newtheorem{rema}{Remarque}
\theoremstyle{definition}
\newtheorem{defi}{Définition}
\begin{document}

\begin{abstract}
L'objectif de ce texte est de présenter la notion de systole d'une variété riemannienne et de faire un survol de la géométrie systolique. On illustrera aussi une technique fondamentale, appelée technique de régularisation, qui est à la base de plusieurs résultats essentiels de géométrie systolique. Je détaillerai comment cette technique permet d'estimer les nombres de Betti d'une variété asphérique (d'après Gromov), et comment elle permet de relier l'entropie volumique à la systole et au volume systolique d'une variété riemannienne (d'après Sabourau). 
\end{abstract}

\subjclass[2010]{53C23 : Global topological method}
\keywords{Cycles géométriques, systole, volume systolique, espace d'Eilenberg-McLane, variété asphérique, nombres de Betti}

\thanks{Je remercie Benoît Kloeckner pour les remarques et suggestions qu'il a pu faire lors de mon passage au séminaire de théorie spectrale et de géométrie de Grenoble, ainsi qu'Ivan Babenko pour les nombreuses discussions sur ce sujet. \\
Ce travail est financé par l'ANR Finsler}
\maketitle

\renewcommand\contentsname{Sommaire}

\tableofcontents
\section{Introduction}
Je vais dans un premier temps présenter la notion de systole d'une variété riemannienne fermée, en m'appuyant sur l'exemple du tore, puis faire un bref survol de certains thèmes de géométrie systolique. 

\subsection{Systole d'une variété}
Considérons un tore plongé dans $\R^3$. On s'intéresse au(x) plus petit(s) lacet(s) non contractile(s) de ce tore. La \textit{systole} du tore est alors la longueur d'un tel lacet (dont je justifierai brièvement l'existence au paragraphe \ref{par-existencesys}), que l'on appellera lacet systolique. 

\begin{figure}[h]
\begin{center}
\includegraphics[width=0.7\linewidth]{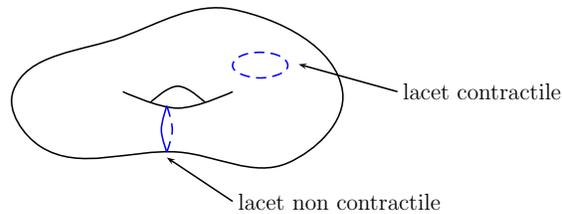}
\caption{Un tore plongé dans $\R^3$.}
\label{fig-torus}
\end{center}
\end{figure}

La figure \ref{fig-torus} laisse penser, qu'à systole fixe, l'aire de ce tore est minorée : si l'on diminue trop l'aire un lacet non contractile plus petit apparaîtra.


Ce phénomène se retrouve clairement pour les tores plats rectangulaires obtenus par identification des côtés opposés d'un rectangle (voir \ref{fig-flattorus}).

\begin{figure}[h]
\begin{center}
\includegraphics[width=0.7\linewidth]{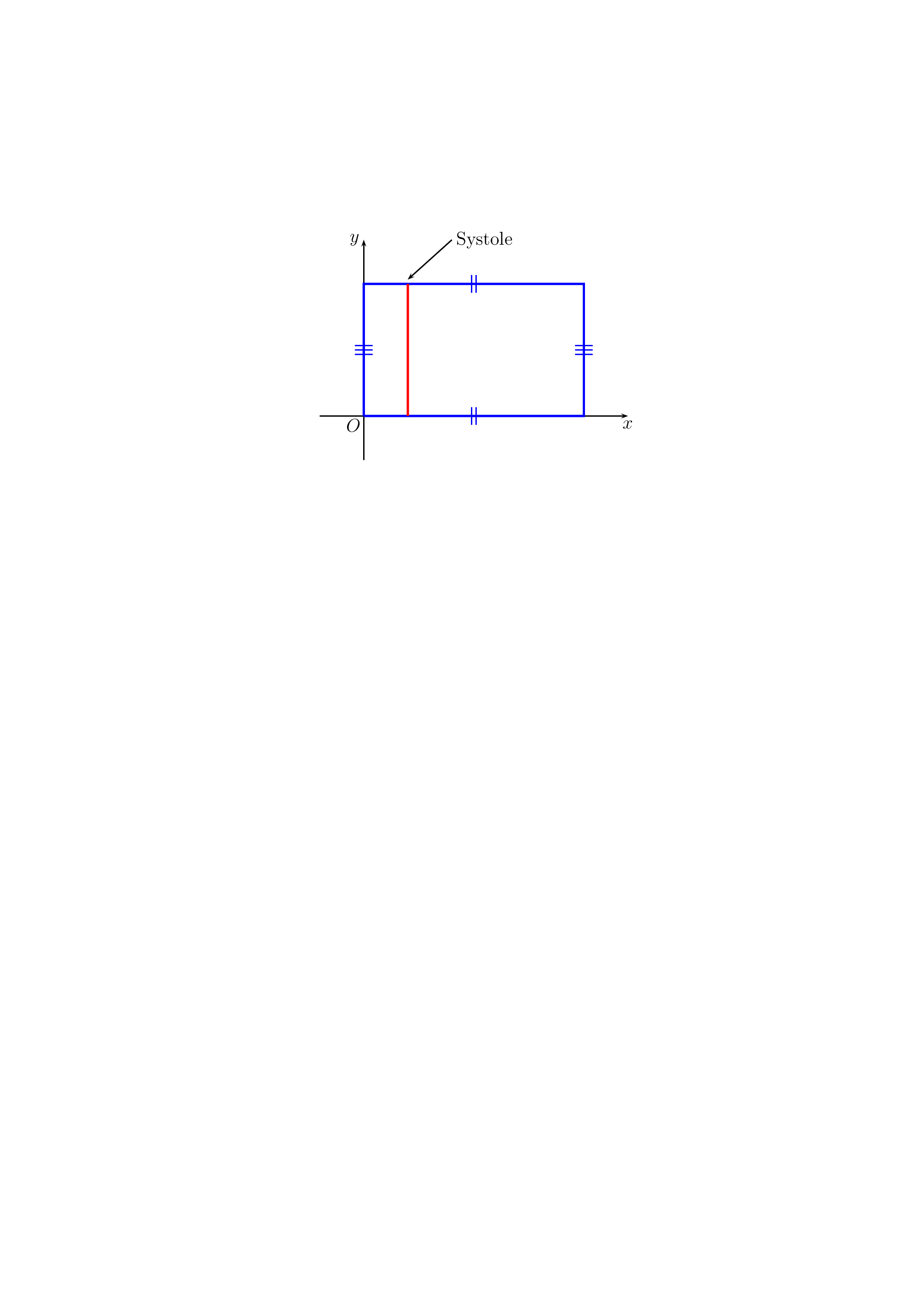}
\caption{La systole  contrôle l'aire d'un tore plat rectangulaire.}
\label{fig-flattorus}
\end{center}
\end{figure}

En fait, on a le résultat suivant, qui est à l'origine de la géométrie systolique.

\begin{theo}[Loewner - 1949]
Soit $M$ un tore de dimension 2. Pour toute métrique riemannienne $g$ sur $M$, on a :
$$\aire(M,g)\ge \dfrac {\sqrt 3}{2}\:\sys(M,g),$$
où $\sys(M,g)$ désigne la longueur du plus petit lacet non contractile de $M$.

Il y a égalité si et seulement si $(M,g)$ est le tore plat hexagonal, obtenu en identifiant les côtés opposés du losange de la figure \ref{fig-hextorus}.
\end{theo}
\begin{figure}[h]
\begin{center}
\includegraphics[width=0.7\linewidth]{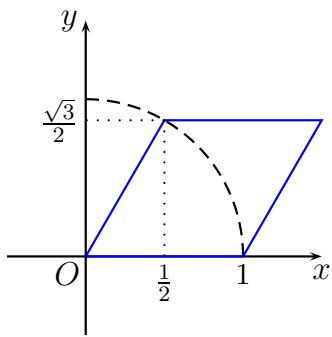}
\caption{Le tore hexagonal.}
\label{fig-hextorus}
\end{center}
\end{figure}

On peut trouver une preuve de ce résultat, qui repose sur le théorème de la représentation conforme, dans \cite{Berger-Loewner} ou \cite{Katz-SG}.

\subsection{Existence de la systole} 
\label{par-existencesys}On va justifier, dans ce paragraphe, l'existence de lacets systoliques. On se place, de manière plus générale, sur une variété riemannienne fermée $(M,g)$, non simplement connexe, de dimension $n\ge 1$.

Un lacet dans $(M,g)$ sera une application $c:[0,1]\to \R$, continue, telle que $c(0)=c(1)$. La systole\index{Systole} de $(M,g)$, notée $\sys(M,g)$, est la longueur du plus petit lacet non contractile dans $M$.

Notons $\Omega M$ l'ensemble des lacets dans $M$. On munit $\Omega M$ de la distance $\delta$ définie par :
$$\delta(\alpha,\beta)=\underset{t\in [0,1]}{\sup} \dist(\alpha(t),\beta(t)).$$
 Une classe \textit{d'homotopie libre} d'un lacet $\alpha$ est l'ensemble des lacets dans $M$ homotopes à $\alpha$. Ces classes d'homotopies libres sont fermées dans $(\Omega M,\delta)$. En effet deux lacets suffisamment proches dans $\Omega M$ sont homotopes; ainsi les classes d'homotopies libres sont ouvertes, donc fermées, dans $(\Omega M,\delta)$.

Maintenant, en utilisant le théorème d'Ascoli, on va prouver qu'il existe, dans chaque classe d'homotopie libre, un lacet de longueur minimale. Soit $\alpha_0$ un lacet dans $M$. On considère la borne inférieure $\ell$ de l'ensemble des longueurs des lacets de $M$ homotopes à $\alpha_0$. On considère une suite minimisante $(\gamma_n)$ de lacets de $M$ homotopes à $\alpha_0$ tels que :
$$\Long(\gamma_n)\le \Long(\alpha_0)+\dfrac 1{2^n}.$$
La longueur de ces courbes est uniformément bornée. Le théorème d'Ascoli assure, quitte à prendre une suite extraite, que la suite $(\gamma_n)$ converge vers un lacet $\gamma$ de $M$. Ce lacet $\gamma$ est homotope à $\alpha_0$, puisque les classes d'homotopies libres sont fermées. Mais la fonction  longueur est semi-continue inférieurement (voir \cite{BBI}, page 34), donc :
$$\Long (\gamma)\le \liminf \Long(\gamma_n)=\ell.$$
Il en résulte que $\Long (\gamma)=\ell$.

Enfin, pour tout $L>0$, le nombre de classes d'homotopie libre représentées par un lacet $\alpha$ de longueur inférieure à $L$ est fini. Pour montrer cela, on raisonne par l'absurde. On suppose  qu'il existe une suite $(\gamma_n)$ dans $\Omega M$ telle que :

\begin{itemize}
	\item [$\bullet$] $\Long(\gamma_n)\le L$;

\item [$\bullet$] Pour $p\neq q$  dans $\N$, les lacets $\gamma_p$ et $\gamma_q$ ne sont pas homotopes.
\end{itemize}
Toujours par le théorème d'Ascoli, il existe une extraction $\varphi$ telle que la suite $(\gamma_{\varphi(n)})$ converge vers un lacet $\gamma$. Mais la classe d'homotopie libre de $\gamma$ est ouverte dans $\Omega M$ : pour $n$ assez grand tous les lacets $\gamma_{\varphi(n)}$ seront homotopes à $\gamma$, ce qui est contradictoire.  

On peut ainsi considérer la plus petite longueur des lacets non contractiles de $M$. On peut montrer sans trop de peine que les lacets systolique de $M$, c'est à dire les lacets qui réalisent cette longueur, sont des géodésiques.  

\subsection{Le résultat de Burago et Hebda} Le contrôle de l'aire du tore par sa systole s'étend aux surfaces fermées de genre supérieur à 1, de manière assez élémentaire (voir \cite{BZ} page 43 ou \cite{Hebda}).

\begin{theo}[Burago/Hebda - 1980]
Toute surface fermée  $(M,g)$ de genre $h\ge 1$ contient une courbe fermée non contractile de longueur $\ell$ telle que $\aire(M,g)\ge \dfrac 12\:\ell$.
\end{theo}

\begin{proof}[Démonstration]
Soit $c$ un lacet systolique de $M$, que l'on suppose paramétré par la longueur d'arc, et soit $m$ un point de $c$. On suppose que $c(0)=m$. On note $\ell=\Long(c)$. Comme $c$ est une géodésique, pour tout $t\in [-\tfrac{\ell}2,\tfrac \ell2]$, on a $\dist(m,c(t))=t$. Si $B=B(m,\tfrac \ell 2)$ désigne la boule de centre $m$ est de rayon $\tfrac \ell2$, montrons que l'on a  :

\begin{equation}
\label{eq-Hebda}
	\aire \big( B(m,\tfrac \ell2)\big)\ge \dfrac{\ell^2}2
\end{equation}

Pour tout $r\in [0,\tfrac \ell2[$, on considère la boule fermée $\overline B(m,r)$, qui est, de par la définition de $\ell$, contractile. Regardons maintenant son bord $S(m,r)$. On a :
$$S(m,r)=\gamma_1\cup \gamma_2,$$
où $\gamma_1$ et $\gamma_2$ sont deux courbes qui relient les points $c(r)$ et $c(-r)$ (voir figure \ref{fig-Hebda}). 

\begin{figure}[h]
\begin{center}
\includegraphics[width=0.7\linewidth]{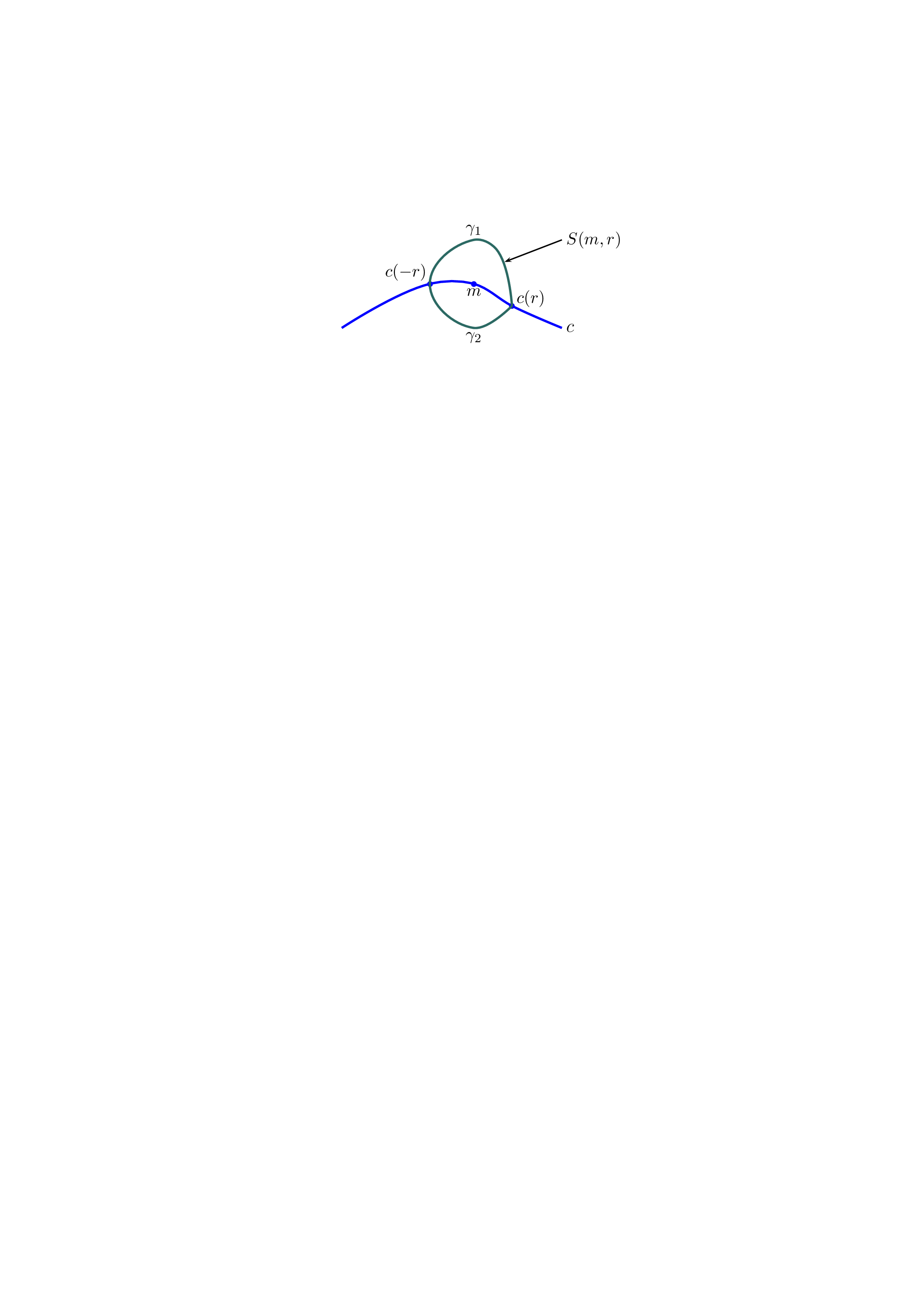}
\caption{La boule $\overline{B}(m,r)$.}
\label{fig-Hebda}
\end{center}
\end{figure}
Si par exemple $\Long (\gamma_1)<2r$, en recollant $\gamma_1$ et $c$, on obtient un lacet non contractile  
de longueur strictement inférieure à $\ell$, ce qui est absurde. On a donc $\Long(\gamma_1)\ge 2r$ et, de même, $\Long(\gamma_2)\ge 2r$.

Ainsi :$$\aire (B)=\dint_0^{\frac\ell 2}\Long S(m,r)\d r\ge4\dint_0^{\frac\ell 2} r\d r=\dfrac{\ell^2}{2}.$$
Le théorème est donc prouvé, puisque $\aire(M,g)\ge \aire (B)$.
\end{proof}
\section{Un survol de la géométrie systolique}


\subsection{La géométrie systolique}
Soit  $(M,g)$ une variété riemannienne fermée, non simplement connexe, de dimension $n\ge 1$. Nous avons vu que la \textit{systole} de $(M,g)$, notée $\sys(M,g)$, est la longueur du plus petit lacet non contractile dans $M$. 
Le \textit{volume systolique}\index{Volume systolique} de $M$ est alors :
$$\sigma(M)=\underset{g}{\inf}\dfrac{\vol(M,g)}{\sys(M,g)^n},$$
où $g$ décrit l'ensemble des métriques riemanniennes lisses sur $M$.

Le thème principal de la géométrie systolique est l'étude du volume systolique. La question suivante est centrale :

\begin{quote}
	\textit{Sous quelles conditions topologiques a-t-on $\sigma(M)>0$ et quelle est sa valeur exacte ?}\\
\end{quote}

Une réponse satisfaisante a été donnée par Gromov à la première partie de la question en utilisant la notion de \textit{variété essentielle}. 

Soit $M$ une variété compacte de groupe fondamental $\pi$. Si $M$ est orientable on note $A=\Z$ et, dans le cas contraire, $A=\Z_2$. Désignons par $[M]$ la classe fondamentale de $M$ dans le groupe d'homologie $H_n(M;A)$. Il existe  une application 
$$f:M\to K(\pi,1),$$
unique à homotopie près, où $K(\pi,1)$ est le  complexe d'Eilenberg-McLane, telle que le morphisme induit par $f$ sur $\pi$ soit égal à l'identité. La variété $M$ est alors dite \textit{essentielle}\index{Variété essentielle} lorsque l'image de $[M]$ par le  morphisme induit en homologie 
$$f_*:H_n(M;A)\to H_n(\K;A)$$ est une classe d'homologie non nulle dans $H_n(\K;A)$. Gromov a démontré (voir \cite{Gromov-FRM}, page 3) le résultat fondamental suivant.\\

\begin{theo}[Gromov]
\label{Gromov}
\index{Inégalité systolique de Gromov}
Il existe une constante $C_n>0$ telle que pour toute variété compacte essentielle $M$ de dimension $n$ on ait :
$$\sigma(M)\ge C_n.$$
\end{theo}

Une preuve détaillée de ce résultat se trouve dans \cite{Guth-Note}. La catégorie des variétés essentielles recouvre notamment :
\begin{enumerate}
	\item Les variétés asphériques i.e. les variétés dont les groupes d'homotopie d'ordre supérieur à 2 sont triviaux, en particulier les variétés à courbures négatives.
	\item Les espaces projectifs $\R P^n$. 
\end{enumerate}

Babenko a démontré, dans le cas où $M$ est orientable, qu'il est nécessaire que $M$ soit essentielle pour avoir $\sigma(M)>0$ (voir \cite{Babenko-AISM}). Dans le même article, il est montré que $\sigma(M)$ est un invariant du type d'homotopie de $M$ : deux variétés ayant même type d'homotopie ont le même volume systolique.\\

Babenko a aussi démontré que, pour une variété essentielle orientable $M$ de groupe fondamental $\pi$, le volume systolique $\sigma(M)$ ne dépend que de l'image $h$ dans $H_n(\K;\Z)$ de la classe fondamentale $[M]$ par l'application induite en homologie (voir \cite{Babenko-TSU}).\\ 

La réponse à la deuxième partie de la question, c'est à dire la détermination de la valeur exacte du volume systolique, est un problème difficile. Le volume systolique des variétés essentielles n'est connu que dans trois cas : 
\begin{itemize}
	\item le tore de dimension 2, pour lequel $\sigma(T^2)=\dfrac {\sqrt 3}{2}$ (Loewner, non publié mais exposé dans  \cite{Berger-Loewner});
	 \item le plan projectif $\sigma(\R P^2)=\dfrac {2}{\pi}$(Pu, voir \cite{Pu} et \cite{Berger-Pu});
	 \item la bouteille de Klein $\sigma(K^2)=\dfrac{2\sqrt 2}{\pi}$ (Bavard, voir \cite{Bavard-Klein} et \cite{Sakai-Klein}).\\
\end{itemize}
 
D'autres questions concernant le volume systolique des variétés essentielles se posent :
\begin{quote}
	
\begin{itemize}
	\item \textit{La borne inférieure de la définition du volume systolique est-elle atteinte ?}
	\item \textit{Dans l'affirmative, quelles sont les métriques optimales sur $M$ ?}\\
\end{itemize}
\end{quote}

Il s'agit encore de questions difficiles dont les réponses sont très partielles. La métrique optimale pour la bouteille de Klein présente certaines singularités. Gromov a démontré que le volume systolique des surfaces compactes est atteint pour des métriques riemanniennes généralisées en un certain sens (voir \cite{Gromov-FRM}, page 63). \\

Pour les surfaces orientables $S_h$ de genre $h$, on dispose d'une estimée inférieure du volume systolique (voir \cite{Gromov-FRM} et \cite{Kodani}) qui est :
$$\sigma(S_h)\ge \dfrac{4\sqrt h+27}{64}.$$

On connaît aussi partiellement le comportement asymptotique de $\sigma(S_h)$ (voir \cite{Sabourau-Katz-Ent} et \cite{Buser-Sarnak}) :
$$\pi\le\underset{h\to +\infty}{\limsup} \ \sigma(S_h)\times \dfrac{\log^2 h} {h}\le  \frac{9\pi}{4}.$$

On peut définir la notion de volume systolique pour des objets plus généraux que des variétés. Par exemple si $X$ est un espace de longueur compact, la notion de systole est parfaitement définie (voir \cite{Reviron}), et on peut alors utiliser le volume de Hausdorff pour définir le volume systolique (voir \cite{Gromov-MS}).\\

\subsection{Volume systolique des polyèdres}
Si maintenant $X$ est un polyèdre (espace topologique muni d'une triangulation) de groupe fondamental $\pi$, il existe une unique application à homotopie près 
$$f:X\to \K$$
telle que le morphisme de $\pi$ induit par $f$ soit égal à l'identité : c'est l'application classifiante\index{Application classifiante}. Un polyèdre compact $X$ est dit $n$-essentiel lorsque l'application classifiante ne peut être déformée dans le squelette de dimension $n-1$ de $\K$. 

On peut munir tout polyèdre de dimension $n$ de métriques polyédrales. Une \textit{métrique polyédrale} sur $X$ est  une famille de métrique riemannienne $(g_\sigma)_{\sigma\in \mathfrak S}$, où $\mathfrak S$ est l'ensemble des simplexes de $X$, qui vérifie :
\begin{quote}
\begin{itemize}
	\item Chaque $g_\sigma$ est une métrique riemannienne lisse à l'intérieur de $\sigma$ ;
	\item Dès que $\sigma_1$ et $\sigma_2$ sont dans $\mathfrak S$, on a l'égalité $$g_{\sigma_1}\mid_{ \sigma_1\cap \sigma_2}=g_{\sigma_2}\mid_{\sigma_1\cap \sigma_2}.$$
\end{itemize}
\end{quote}
On désignera par une seule lettre $g$ la famille $(g_\sigma)_{\sigma\in \Sigma}$ et on dira que $g$ est une métrique riemannienne lisse par morceaux (ou polyédrale) sur $X$.

Ler polyèdre $X$ est alors est un espace de longueur pour la distance induite par cette famille de métriques. La systole et le volume de $X$ sont correctement définis. Le \textit{volume systolique} de $X$ est  :
$$\sigma(X)=\underset{g}{\inf}\dfrac{\vol(X,g) }{\sys(X,g)^n},$$
où $g$ décrit l'ensemble des métriques polyédrales sur $X$.\\

Le théorème \ref{Gromov} reste valable pour les polyèdres $n$-essentiels de dimension $n$ (\cite[Appendice 2]{Gromov-FRM}).

\subsection{Aire systolique d'un groupe de présentation finie}
Le volume systolique d'un complexe simplicial permet de définir l'aire systolique\index{Aire systolique d'un groupe} d'un groupe $G$ de présentation finie, qui est :
$$\sigma(G)=\underset{P}{\inf}\: {\sigma(P)},$$
où  $P$ décrit l'ensemble des complexes simpliciaux de dimension inférieure ou égale à 2 et  de groupe fondamental $G$. 

Pour tout groupe non libre de présentation finie,  on a (voir \cite{Sabourau-Rudyak2}) : $$\sigma(G)\ge \dfrac{\pi}{16}.$$

Lorsque $G$ est un groupe de présentation finie non trivial, sans facteur libre isomorphe à $\Z$, alors :
$$\sigma(G)\ge C\dfrac{b_1(G)}{\log^2(b_1(G)+2)},$$
où $C>0$ est une constante universelle, et $b_1(G)$ est le premier nombre de Betti de $G$ (\cite{Balacheff-Parlier-Sabourau}).

L'aire systolique d'un groupe de présentation finie $G$ est intimement liée à la notion de complexité simpliciale d'un groupe, qui est le nombre minimal de simplexes de dimension 2 d'un complexe simplicial deux dimensionnel de groupe fondamental $G$ (voir \cite{BBB}).

\subsection{Systole relative et volume systolique relatif} Soit $\pi$ un groupe de présentation finie. Considérons un complexe simplicial $X$ de dimension $n$, muni d'une métrique polyédrale $g$ tel qu'il existe une application continue $f:X\to \K$. 

La systole relative\index{Systole relative} de $X$, notée ici $\sys(X,f,g)$, est la longueur du plus petit lacet $c$ dans $X$ tel que le lacet $f\circ c$ soit non contractile dans $\K$. On peut alors définir le volume systolique relatif\index{Volume systolique relatif} de $V$ :
$$\sigma(X,f,g)=\dfrac{\vol(X,g)}{\sys(V,f,g)^n}.$$ 

Pour un polyèdre de dimension 2, lorsque l'application $f$ ne se déforme pas dans le squelette de dimension 1 de $\K$, on a (voir \cite{Sabourau-Katz2}) :
$$\dfrac{\aire(X,g)}{\sys(V,f,g)^2}\ge \dfrac 18.$$\\

\subsection{Volume systolique d'une classe d'homologie}
Soit $\pi$ un groupe de présentation finie. On considère une classe d'homologie non nulle $h$ dans $H_n(\K;\Z)$. Cette classe peut être représentée par des pseudo-variétés $V$, compactes et orientables, de dimension $n$. 
Rappelons qu'une \textit{pseudo-variété}\index{Pseudo-variété} de dimension $n$ est un complexe simplicial fini $K$ tel que :

\begin{itemize}
	\item $\dim K=n$;
	\item Chaque simplexe dans $K$ est face d'un simplexe de dimension $n$ (homogénéité de la dimension);
	\item Chaque simplexe de dimension $n-1$ est face d'exactement deux simplexes de dimension $n$ (pas de bifurcation);
	\item Dès que $\sigma$ et $\tau$ sont deux simplexes distincts de dimension $n$ dans $K$, il existe une suite $\sigma_1=\sigma,\ldots,\sigma_p=\tau$ de simplexes de dimension $n$ dans $K$ tels que pour tout $i$ dans $\set{1,p-1}$ les simplexes $\sigma_i$ et $\sigma_{i+1}$ aient une face de dimension $n-1$ commune (forte connexité).\\
\end{itemize}

Il est équivalent de dire qu'un complexe simplicial $K$ est une pseudo-variété de dimension $n$ lorsque $K$ est de dimension $n$ et lorsqu'il existe un sous-complexe $\Sigma\subset K$ vérifiant les trois propriétés suivantes :

\begin{itemize}
	\item $\dim \Sigma \le n-2$;
	\item $K\setminus \Sigma$ est une variété topologique de dimension $n$ dense dans $K$;
	\item L'espace $K\setminus \Sigma$ est connexe.\\
\end{itemize}

Lorsque $V$ est une pseudo-variété et $f:V\to \K$, on dit que $(V,f)$ représente $h$ lorsque $f_*[V]=h$, où 
$$f_*:H_n(V;\Z)\to H_n(\K;\Z)$$
est le morphisme induit en homologie, et $[V]$ la classe fondamentale de $V$. La représentation de $h$ par le couple $(V,f)$ est dite \textit{normale} lorsque le morphisme $f_*:\pi_1(V)\to \pi$ induit par $f$ est un épimorphisme. \index{Représentation normale}\\

On peut munir chacune de ces pseudo-variétés d'une métrique polyédrale $g$. Le triplet $(V,f,g)$ s'appelle un \textit{cycle géométrique} représentant $h$. On définit alors le \textit{volume systolique}  de $h$ par 
$$\sigma(h)=\underset{(V,f,g)}{\inf}\sigma(V,f,g),$$
 où $(V,f,g)$ décrit l'ensemble des cycles géométriques qui représentent la classe d'homologie $h$. \\
 
D'après \cite{Gromov-FRM}, pour toute classe d'homologie non nulle $h$ dans $H_n(\pi;\Z)$, on a $\sigma(h)>0$. 

Lorsque $(V,f)$ est une représentation normale de $h$, on dit qu'elle est \textit{admissible}\index{Représentation admissible} lorsque tout élément de $\pi_1(V)$ peut être représenté par un lacet ne passant pas par le lieu singulier de $V$. Dans le cas où $(V,f)$ est une représentation normale admissible de $h$ on a :
$$\sigma(h)=\underset{g}{\inf}\: \sigma(V,f,g),$$
où $g$ décrit l'ensemble des métriques polyédrales sur $V$ (voir \cite{Babenko-Balacheff-DVS2}). Soulignons d'ailleurs que $h$ admet toujours une représentation normale admissible. \\

Cette présentation de la géométrie systolique est forcément incomplète. On pourra la compléter en consultant les articles \cite{Berger-Systole}, \cite{Croke-Katz} et \cite{Gromov-Actes}, ainsi que le livre \cite{Berger-PV}. Le livre \cite{Katz-SG} est un état des lieux en 2007. Deux autres références présentent des résultats fondamentaux de cette géométrie : \cite{Gromov-FRM}, qui peut en être considéré comme le point de départ, et \cite{Gromov-MS}, qui est une version augmentée de \cite{Gromov-SM}.\\ 

Donnons enfin comme références, l'article \cite{Guth-MSG}, qui relie des idées essentielles de la géométrie systolique à d'autres branches des mathématiques, ainsi que l'article \cite{Alvarez-Balacheff}, qui fait le lien entre la géométrie systolique et la géométrie de contact.

\section{La technique de régularisation}

\subsection{Cycle géométriques réguliers} 
Soient $\pi$ un groupe de présentation finie et $h$ une classe d'homologie dans $H_n(\pi;\Z)$. Rappelons qu'un cycle géométrique représentant $h$ est un triplet $(V,f,g)$ où 
\begin{itemize}
	\item $V$ est une pseudo-variété de dimension $n$, compactes et orientables ;
	\item $f:V\to \K$ est une application continue telle, qu'au niveau de l'homologie, $f_*[V]=h$, où $[V]$ désigne la classe fondamentale de $V$. 
	\item $g$ est une métrique polyédrale sur $V$.
\end{itemize}
Les objets que l'on va utiliser dans la suite sont des cycles géométriques particuliers qui représentent une classe d'homologie entière non nulle dans $H_n(\pi;\Z)$, dans lesquelles on dispose d'un contrôle du volume des petites boules. Ils sont définis par le théorème suivant.

\begin{theo}[Gromov]
\label{theo-reg}
Soient $\pi$ un groupe de présentation finie et $h$ une classe d'homologie entière non nulle dans $H_n(\pi;\Z)$. Pour tout $\eps$ dans $]0,\tfrac12\sys(V,f,g)[$, il existe un cycle géométrique $(V,f,g)$ représentant $h$ tel que :
\begin{enumerate}
	\item $\sigma(V,f,g)\le \sigma(h)+\eps$.
	\item Pour $R\in [\eps,\tfrac 12\:\sys(V,f,g)]$, les boules $B(R)$ de rayon $R$ dans  $V$ vérifient :
\begin{equation}
\label{eq-volboule}
			\vol (B(R))\ge A_n R^n,
\end{equation}
pour une certaine constante universelle $A_n$, qui ne dépend que de la dimension de $h$.
\end{enumerate}
Un tel cycle géométrique est dit $\eps$-régulier\index{Cycle géométrique régulier}.
\end{theo}

On peut comparer ce résultat avec l'inégalité \ref{eq-Hebda}, page \pageref{eq-Hebda}. Ce théorème, dont une démonstration exhaustive se trouve dans \cite{Bulteau}, est à la base de plusieurs résultats importants de géométrie systolique. La \textit{technique de régularisation} consiste à utiliser ces cycles réguliers pour obtenir des informations topologiques sur une classe $h\in H_n(\pi;\Z)$. Elle a permis notamment de relier le volume systolique d'une variété fermée asphérique à d'autres invariants homotopiques de cette variété. 

Je présenterai dans le paragraphe 4 comment obtenir, d'après Gromov, une majoration des nombres de Betti d'une variété fermée asphérique en fonction de son volume systolique. Puis, je détaillerai, dans le paragraphe 5, la preuve d'un théorème de Sabourau, reliant le volume systolique  et l'entropie volumique d'un cycle géométrique régulier, afin de souligner les idées essentielles de géométrie systolique qui y interviennent.

\subsection{Premières illustrations de la technique de régularisation}
Je vais donner ici quelques illustrations  du théorème \ref{theo-reg}. Commençons par une situation élémentaire  qui montre comment l'inégalité (\ref{eq-volboule}) sur le volume des boules permet de préciser la topologie des cycles géométriques réguliers. 

Soit $(V,f,g)$ un cycle $\eps$-régulier qui représente une classe d'homologie non triviale dans $H_n(\pi;\Z)$. Considérons un système maximal $B_1,\ldots,B_N$ de boules ouvertes disjointes de $V$ de rayon $R_0=\tfrac 1{2}\:\sys(V,f,g)$. Les boules concentriques $2B_1,\ldots,2B_N$ de rayon $2R_0$ recouvrent $V$. Appelons $\mathcal N$ le nerf de ce recouvrement\index{Nerf d'un recouvrement}. Il s'agit du complexe simplicial construit de la manière suivante :

\begin{quote}
\begin{itemize}
	\item Les sommets $p_1,\ldots,p_N$ de $\mathcal N$ sont identifiés avec les boules du recouvrement ;
	\item Deux sommets $p_i$ et $p_j$ sont reliés par une arête lorsque $2B_i\cap 2B_j\neq \emptyset$;
	\item Pour $p\ge 2$ entier, les sommets $p_{i_0},\ldots ,p_{i_p}$ forment un simplexe de dimension $p$ de $\mathcal N$ lorsque :
	$$2B_{i_0}\cap\cdots\cap 2B_{i_p}\neq \emptyset.$$
\end{itemize}
\end{quote}

On peut alors borner le nombre $N_k$ de simplexes de dimension $k$ de $\mathcal N$ en fonction du volume systolique $\sigma(h)$ de la classe $h$. Par exemple, on a $$\vol(V,g)\ge \dsum_{i=1}^{N_0}\vol (B_i)\ge N_0 A_n R_0^n,$$
ce qui permet de borner le nombre de sommets $N_0$ de $\mathcal N$ en fonction de $\sigma(h)$.\\


La première conséquence importante du théorème \ref{theo-reg} concerne le volume systolique d'une classe d'homologie non nulle dans $H_n(\pi;\Z)$, lorsque $\pi$ est un groupe de présentation finie.

\begin{theo}
\label{theo-sigmah}
Soient $\pi$ un groupe de présentation finie et $n\ge 1$. Il existe une constante $C_n>0$, qui ne dépend que de $n$, telle que pour toute classe d'homologie non nulle $h$ dans $H_n(\pi;\Z)$ on ait :
$$\sigma(h)\ge C_n.$$ 
\end{theo}

\begin{proof}[Démonstration]
Soit $h\in H_n(\pi;\Z)$. Fixons  provisoirement $\eps>0$. Il existe alors, selon le théorème A, un cycle géométrique $\eps$-régulier $(V,f,g)$ qui représente la classe $h$. Soit $v\in V$. On a alors :
$$\vol(V,g)\ge \vol\big(B(v,\tfrac 12\sys(V,f,g)\big)\ge \dfrac{A_n}{2^n}\:\sys(V,f,g)^n.$$
Il en résulte que $\sigma (V,f,g)\ge \dfrac{A_n}{2^n}$. Ainsi :
$$\sigma(h)+\eps\ge \dfrac{A_n}{2^n}.$$
En faisant tendre $\eps$ vers $0$, on obtient le résultat souhaité, avec $C_n=\dfrac{A_n}{2^n}$.
\end{proof}

On peut alors en déduire une preuve de l'inégalité systolique de Gromov (théorème \ref{Gromov}). 
\begin{quote}
Soit $M$ une variété essentielle orientable de dimension $n$, de groupe fondamental $\pi$. Il existe une application $f:M\to \K$ telle que $f_*[M]=h\neq 0$, où 
$$f_*:H_n(M;\Z)\to H_n(\pi;\Z)$$
est le morphisme induit en homologie. D'après le théorème \ref{theo-sigmah}, on a $\sigma(h)\ge C_n$, pour une constante universelle $C_n>0$ qui ne dépend que de $n$. Mais $(M,f)$ est une représentation normalisée admissible de $h$ : selon \cite{Babenko-Balacheff-DVS2}, on a $\sigma(M)=\sigma(h)$.\hfill $\square$
\end{quote}

Gromov utilise aussi le théorème \ref{theo-reg} afin de relier des propriétés topologiques de $h$ au volume systolique. Plus précisément, ces cycles réguliers permettent à Gromov d'obtenir des inégalités entre le volume systolique et deux invariants topologiques importants de la classe $h$, qui sont :

\begin{itemize}
	\item La hauteur simpliciale $h_s(h)$ de $h\in H_m(\pi;\Z)$, qui est le nombre minimal de simplexes de toute dimension d'un cycle géométrique qui représente $h$;
	\item Le volume simplicial\index{Volume simplicial} $\norm{h}_\Delta$, défini comme l'infimum des sommes $\dsum \mod{r_i}$ sur toutes les représentations de $h$ par des cycles singuliers réels $\dsum r_i\sigma_i$.\\
	\end{itemize}

Gromov a notamment obtenu les résultats suivants (voir \cite{Gromov-FRM}, théorème 6.4.C'' et théorème 6.4.D' et \cite{Gromov-Actes}, paragraphe 3.C.3).\\

\begin{theo}[Gromov]
\label{theoGromovIllus}
Soit $\pi$ un groupe de présentation finie, $h\in H_m(\pi;\Z)$, une classe d'homologie non nulle de dimension $m\ge 2$.
\begin{enumerate}
	\item Il existe deux constantes positives $C_m$ et $C_m'$, qui ne dépendent que de $m$, telles que :
	$$\sigma(h)\ge C_m\dfrac{h_s(h)}{\exp(C_m'\sqrt {\ln h_s(h)})}.$$
\item	Il existe une constante positive $C_m''$ qui ne dépend que de la dimension $m$ telle que :
$$\sigma(h)\ge C_m''\dfrac{\norm{h}_\Delta}{(\ln (2+\norm{h}_\Delta))^m}.$$
\end{enumerate}
\end{theo}


\renewcommand{\baselinestretch}{1.2}


\section{Nombres de Betti d'une variété asphérique}
\subsection{Introduction}

Soient $\pi$ un groupe de présentation finie et $h\in H_n(\pi;\Z)$. La technique de régularisation permet de relier le nombre de simplexes d'un polyèdre particulier au volume systolique de $h$. Plus précisément, on va démontrer le théorème suivant (voir \cite{Gromov-FRM}, page 72).

\begin{theo}[Gromov]
\label{theopolyedre}
Soient $\pi$ un groupe de présentation finie, $h$ une classe d'homologie non nulle dans $H_n(\pi;\Z)$. Il existe un polyèdre $\mathcal P$ et une application $g:\mathcal P\to \K$ tels que :
\begin{itemize}
\item $g$ induit un épimorphisme de groupes $g_*:\pi_1(\mathcal P)\to \pi_1(M)$;
	\item le nombre $N_k$ de simplexes de dimension $k$ de $\mathcal P$ vérifie :
$$
		N_k\le  C_n\sigma(h)\exp\big(C_n'\sqrt{\log \sigma(h)}\big),
$$
où $C_n$ et $C_n'$ sont deux constantes universelles qui ne dépendent que de $n$;
	\item l'application induite en homologie par $g$ envoie une certaine classe $h'$ de $H_n(\mathcal P;\Z)$ sur la classe $h$.
\end{itemize}
\end{theo}

Dans tout ce paragraphe, $h\in H_n(\pi;\Z)$ et $(V,f,\mathcal G)$ désigne un cycle géométrique $\eps$-régulier qui représente $h$, où $\eps\in ]0,\tfrac12\:\sys(V,f,\mathcal G)]$. On a donc (voir théorème \ref{theo-reg}, page \pageref{theo-reg}) $\sigma(V,f,\mathcal G)\le (1+\eps)\sigma(h)$ et, pour tout $R$ dans $[\eps,\tfrac12 \:\sys(V,f,\mathcal G)]$, les boules $B(R)$ de rayon $R$ dans $V$ vérifient :
$$\vol(B(R))\ge A_nR^n.$$

\subsection{Nerf associé à un recouvrement particulier de $V$}
On considère un recouvrement $\mathcal O$ de $V$ par des boules ouvertes $O_1=B(v_1,R_1),\ldots,O_N=B(v_N,R_N)$ telles que, pour tout $i\in \set{1,\ldots,N}$, on ait :
$$R_i< \dfrac 16\sys(V,f,\mathcal G).$$

On peut réaliser géométriquement $\mathcal P$ dans $\R^N$ de la manière suivante :
 
  \begin{enumerate}
	\item Pour $i\in \set{1,\ldots,N}$, le sommet $p_i$ qui correspond à $O_i$, est le point de $\R^N$ dont toutes les coordonnées sont nulles, sauf la $i$-ème qui est égale à $1$.
	\item Les sommets $p_1,\ldots,p_{m+1}$ forment un simplexe euclidien de dimension $m$ lorsque :
		$$O_1\cap O_2\cap \cdots\cap O_{m+1}\neq \emptyset.$$
\end{enumerate}

A ce recouvrement, on associe une partition de l'unité définie de la manière suivante. On part d'une famille $(\psi_i)_{1\le i\le N}$ de fonctions $\psi_i: V\to \R$, continues, telle que, pour tout $i\in \set{1,\ldots,N}$, la fonction $\psi_i$ soit strictement positive sur $O_i$ et nulle hors de $O_i$. Pour $i$ dans $\set{1,\ldots,N}$ on pose :
$$\phi_i=\dfrac{\psi_i}{\dsum_{j=1}^N\psi_j}.$$
Chacune des fonctions $\phi_i$ est continue, à valeurs dans $[0,1]$, strictement positive sur $O_i$, et on a :
$$\dsum_{i=1}^N \phi_i=1.$$

A cette partition de l'unité, on peut associer une fonction $p:V\to \mathcal P$
qui, à un point $v$ de $V$, associe le point de coordonnées $(\phi_1(v),\ldots,\phi_N(v))$ de $\R^N$. Elle est bien à valeurs dans $\mathcal P$. On peut aussi noter (voir par exemple \cite{Dugundji}, page 172) que $p^{-1}(\text{star}(p_i))\subset O_i$ pour tout $i\in \set{1,\ldots,N}$, où $\text{star}(p_i)$ désigne l'union de l'intérieur de tous les simplexes  de $P$ qui contiennent $p_i$.

\begin{rema}
Si on considère une autre partition de l'unité $\dsum_{i=1}^N\overline{\varphi}_i=1$ associée au recouvrement $\mathcal O=(O_i)_{1\le i\le N}$ de $V$, la fonction associée $\overline p$ est homotope à $p$. En effet, pour $t$ dans $[0,1]$ et $i\in \set{1,\ldots,N}$, on pose : $H_i(t,.)=(1-t)\varphi_i+t\overline{\varphi}_i$.
Pour tout $t\in [0,1]$, on obtient alors une partition de l'unité $\dsum_{i=1}^N H_i(t,.)$, d'où l'application $H(t,.)=(1-t)p+t\overline p$,
qui permet de réaliser une homotopie de $p$ à $\overline p$.
\end{rema}

Il existe alors une application naturelle de $\mathcal P$ dans $\K$. 

\begin{lemm}
\label{lemhomotopie}
Avec les notations précédentes, il existe une application continue $g:\mathcal P\to \K$ qui induit un épimorphisme au niveau des groupes fondamentaux telle que $g\circ p$ soit homotope à $f$, i.e. telle qu'à homotopie près, le diagramme suivant commute.
$$\xymatrix{
 V \ar[d]_p\ar[rr]^f&&\K\\
\mathcal P\ar[rru]_{g} &&
}$$
\end{lemm}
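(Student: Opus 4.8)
Le plan est de construire $g$ par r\'ecurrence sur les squelettes de $\mathcal P$, en exploitant la condition sur la systole relative pour prolonger l'application aux $2$-simplexes, puis l'asph\'ericit\'e de $\K$ pour les simplexes de dimension sup\'erieure. Sur le $0$-squelette, je poserais $g(p_i)=f(v_i)$ pour tout $i\in\set{1,\ldots,N}$. Sur le $1$-squelette, pour chaque ar\^ete $[p_i,p_j]$ de $\mathcal P$ (ce qui \'equivaut \`a $O_i\cap O_j\neq\emptyset$), je choisirais un point $v_{ij}$ dans cette intersection et d\'efinirais $g$ sur $[p_i,p_j]$ comme l'image par $f$ de la concat\'enation des g\'eod\'esiques minimisantes $v_i\to v_{ij}$ (de longueur $<R_i$) et $v_{ij}\to v_j$ (de longueur $<R_j$).

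Le point crucial est le passage au $2$-squelette. Pour un $2$-simplexe $[p_i,p_j,p_k]$, on a $O_i\cap O_j\cap O_k\neq\emptyset$, et son bord est envoy\'e par $g$ sur l'image par $f$ du lacet de $V$
$$v_i\to v_{ij}\to v_j\to v_{jk}\to v_k\to v_{ki}\to v_i.$$
Ce lacet est de longueur strictement inf\'erieure \`a $2(R_i+R_j+R_k)<6\cdot\tfrac16\sys(V,f,\mathcal G)=\sys(V,f,\mathcal G)$. Par d\'efinition m\^eme de la systole relative, son image par $f$ est contractile dans $\K$, ce qui autorise le prolongement de $g$ au $2$-simplexe. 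Pour un simplexe de dimension $m\ge3$, le bord est envoy\'e sur une $(m-1)$-sph\`ere de $\K$; comme $\pi_{m-1}(\K)=0$ par asph\'ericit\'e, le prolongement se fait sans obstruction. On obtient ainsi une application continue $g:\mathcal P\to\K$.

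Il reste \`a v\'erifier que $g\circ p$ est homotope \`a $f$. Je m'appuierais sur le fait classique que les applications d'un CW-complexe vers l'espace asph\'erique $\K$ sont classifi\'ees \`a homotopie pr\`es par le morphisme qu'elles induisent sur $\pi_1$ (\`a conjugaison pr\`es); il suffit donc de montrer que $(g\circ p)_*=g_*\circ p_*$ et $f_*$ co\"{\i}ncident sur $\pi_1(V)$. Cela se fait de fa\c{c}on combinatoire: quitte \`a subdiviser finement, un lacet de $V$ se d\'eforme en un lacet polygonal traversant une suite de boules $O_{i_0},O_{i_1},\ldots$, que $p$ envoie sur le lacet d'ar\^etes correspondant de $\mathcal P$, et que $g$ renvoie sur l'image par $f$ d'un lacet $c'$ de $V$ de la forme $v_{i_0}\to v_{i_0i_1}\to v_{i_1}\to\cdots$. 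Comme $c'$ et le lacet initial diff\`erent par de petits lacets de longueur inf\'erieure \`a $\sys(V,f,\mathcal G)$, leurs images par $f$ sont homotopes, d'o\`u $(g\circ p)_*=f_*$.

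Enfin, la repr\'esentation $(V,f)$ de $h$ \'etant normale, le morphisme $f_*:\pi_1(V)\to\pi$ est surjectif, et l'\'egalit\'e $f_*=g_*\circ p_*$ force $g_*$ \`a \^etre surjective. Je m'attends \`a ce que le point le plus d\'elicat soit le prolongement au $2$-squelette: tout y repose sur l'estimation de longueur qui, via la d\'efinition de la systole relative, garantit la contractilit\'e des lacets-bords, le contr\^ole $R_i<\tfrac16\sys(V,f,\mathcal G)$ impos\'e au recouvrement \'etant pr\'ecis\'ement ajust\'e pour que cette estimation passe.
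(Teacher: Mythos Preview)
Your proof is correct and follows essentially the same strategy as the paper: define $g$ on the $1$-skeleton of $\mathcal P$ as $f$ applied to short paths in $V$ joining the ball centres, extend to the $2$-skeleton thanks to the systole bound, then to all of $\mathcal P$ by asphericity of $\K$. The paper uses a direct minimizing geodesic $v_i\to v_j$ on each edge (packaged as a map $\Phi_1:\mathcal P_1\to V$ with $g=f\circ\Phi_1$) rather than your broken geodesic through a witness point $v_{ij}\in O_i\cap O_j$, but the resulting length estimate on $\partial\Delta$ is identical.

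The one substantive difference concerns the homotopy $g\circ p\simeq f$. The paper constructs it explicitly: it first checks that $p\circ\Phi_1:\mathcal P_1\to\mathcal P$ is homotopic to $\id_{\mathcal P_1}$ rel $\mathcal P_0$, deduces that $g\circ p$ and $f$ agree up to homotopy on $\im\Phi_1$, and then extends this homotopy over the skeleta of $V$ (refining the triangulation so that $1$-simplices are short enough). You instead invoke the standard classification of maps from a CW-complex into a $\K$ by the induced morphism on $\pi_1$, and verify $(g\circ p)_*=f_*$ combinatorially. Your route is shorter and perfectly legitimate; the paper's is more self-contained and makes visible \emph{why} the radius bound $R_i<\tfrac16\sys(V,f,\mathcal G)$ suffices at each step.
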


\begin{proof}[Démonstration]
Pour $k\in \N$ on note $\mathcal P_k$ le squelette de dimension $k$ de $\mathcal P$. Commençons par définir une application $\Phi_1:\mathcal P_1\to V$ de la manière suivante :

\begin{enumerate}
	\item Pour tout $i\in \set{1,\ldots,N}$, on pose $\Phi_1(p_i)=v_i$, où, rappelons le, $v_i$ est le centre de la boule $O_i$ du recouvrement $\mathcal O$. 
	\item Lorsque $p_i$ et $p_j$ sont deux sommets de $\mathcal P$ reliés par une arête $\gamma_{i,j}:[0,1]\to P$, on pose, pour tout $t$ dans $[0,1]$ :
$$\Phi_1(\gamma_{i,j}(t))=c_{i,j}(t),$$
où $c_{i,j}:[0,1]\to V$ est un chemin minimisant de $v_i$ à $v_j$.
\end{enumerate}
On peut noter que l'application $$p\circ \Phi_1: \mathcal P_1\to \mathcal P$$ est homotope à $\id_{\mathcal P_1}$ relativement à $\mathcal P_0$. En effet, prenons $p_i$ et $p_j$ deux sommets de $\mathcal P$. Si $v\in \Phi_1([p_i,p_j])$, alors $p(v)\in \text{star}([p_i,p_j])$, où  $\text{star}([p_i,p_j])$ est l'étoile de $[p_i,p_j]$. Il en résulte que l'application
$p\circ \Phi_1:[p_i,p_j]\to \mathcal P$ est homotope à l'identité de $[p_i,p_j]$ relativement à $\set{p_i,p_j}$.

Pour $i$ et $j$ distincts dans $\set{1,\ldots,N}$ et $t$ dans $[0,1]$, posons :
$$g\big(\gamma_{i,j}(t)\big)=f\circ \Phi_1\big(\gamma_{i,j}(t)\big).$$
On définit ainsi une application $g:\mathcal P_1\to \K$ telle que $g=f\circ \Phi_1$. Si maintenant $\Delta=[p_i,p_j,p_k]$ est un simplexe de dimension 2 dans $\mathcal P$, son bord $\partial \Delta$ est un lacet $c$ tel que :
$$\Long(\Phi_1(c))<\sys(V,f,\mathcal G).$$
Il en résulte que $g(\partial \Delta)=f(\Phi_1(\partial \Delta))$ est un lacet contractile dans $\K$, ce qui permet de prolonger l'application $g$ au squelette de dimension 2 de $\mathcal P$. Comme $\K$ est asphérique, il n'y plus d'obstruction à prolonger $g$ à $\mathcal P$ tout entier.

Il reste à démontrer que $g\circ p$ est homotope à $f$. Quitte à raffiner la structure simpliciale de $V$, on peut supposer que les centres des boules $v_1,\ldots,v_N$ sont des sommets de $V$, et que tout simplexe de dimension 1 de $V$ est de longueur inférieure à $\tfrac 13\sys(V,f,\mathcal G)$.
Comme $p\circ \Phi_1: \mathcal P_1\to \mathcal P$ est homotope à $\id_{\mathcal P_1}$ relativement à $\mathcal P_0$, l'application $$g\circ p:\im \Phi_1\to \K$$ est homotope à la restriction de $f$ à $\im \Phi_1$. Il existe alors une homotopie $h_t:\im \Phi_1\to \K$ telle que $h_0=f$ et $h_1=g\circ p$. Chaque $h_t$ se prolonge à $V_1\cup \im \Phi_1$, où $V_1$ est le squelette de dimension $1$ de $V$.  Soit maintenant $\Delta$ un simplexe de dimension 2 de $V$. Son bord $\partial \Delta$ est un lacet dans $V$ de longueur strictement inférieure à $\tfrac 13\sys (V,f,\mathcal G)$, donc $h_t(\partial \Delta)$ est contractile ce qui permet de prolonger $h_t$ en une homotopie de $f$ à $g\circ p$ sur le squelette de dimension 2 de $V$, et il n'y a plus d'obstruction à prolonger $h_t$ en une homotopie de $f$ à $g\circ p$ sur $V$ tout entier. 

Le fait que l'application $g_*:\pi_1(K)\to \pi$ soit un épimorphisme résulte du fait que l'on peut choisir un cycle $(V,f,\mathcal G)$ qui est normalisé.
\end{proof}


\begin{rema}
Le diagramme commutatif du lemme \ref{lemhomotopie} induit alors un diagramme commutatif au niveau des groupes d'homologie, et on a alors :
$$g_*(p_*[V])=h,$$
où $[V]$ est la classe fondamentale de $V$.
\end{rema}

\subsection{Boules admissibles}

On introduit ici la notion de boules admissibles (voir \cite{Gromov-FRM}, théorème 5.3.B) du cycle géométrique $\eps$-régulier $(V,f,\mathcal G)$. Cette notion est aussi utilisée dans  \cite{BBB}.

\begin{defi}
Soit $\alpha>0$. Soient $R_0=\tfrac 1{12} \:\sys(V,f,\mathcal G)$ et $\eps,r,R$ des réels tels que $\eps\le r\le R< R_0$. Pour $v\in V$, on dira que la boule $B(v,R)$ est $(\alpha,r)$-admissible \index{Boules admissibles} lorsque :
\begin{quote}
\begin{enumerate}
	\item $\vol(B(v,5R))\le \alpha\: \vol (B(v,R))$
	\item Pour tout $R'\in[R,R_0]$, $\vol(B(v,5R'))\ge \alpha\: \vol (B(v,R'))$.
\end{enumerate}
\end{quote}
Une boule de rayon $R_0$ sera dite $(\alpha,r)$-admissible lorsque $\vol(B(v,5R_0))\le \alpha\: \vol (B(v,R_0))$.
\end{defi}

Avec le choix fait de $R_0$, le volume d'une boule admissible est supérieure à $A_nR^n$. Mais on peut obtenir une borne inférieure du volume d'une boule $(\alpha,r)$-admissible en fonction de $R_0$ et de $\alpha$. 

\begin{lemm}
\label{lemvolbouleadmissible}
Soit $\alpha>5^n$. Lorsque $B(v,R)$ est une boule $(\alpha,r)$-admissible du cycle géométrique $\eps$-régulier $(V,f,\mathcal G)$, on a :
\begin{equation}
\label{eqvolbouleadmissible}
	\vol B(v,R)\ge C_n(\alpha) R_0^n,
\end{equation}
où $C_n(\alpha)=5^{-m_0(\alpha)n}A_n$ et $m_0(\alpha)=\dfrac{\log \vol(V,\mathcal G)-n\log R_0-\log A_n}{\log \alpha -n\log 5}$
\end{lemm}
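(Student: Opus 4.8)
Le plan est de combiner la minoration du volume des petites boules, fournie par la $\eps$-r\'egularit\'e, avec la croissance g\'eom\'etrique du volume impos\'ee par la deuxi\`eme condition d'admissibilit\'e. Comme $\eps\le r\le R< R_0\le \tfrac12\sys(V,f,\mathcal G)$, la $\eps$-r\'egularit\'e du cycle donne d'embl\'ee $\vol B(v,R)\ge A_nR^n$. Cette minoration est toutefois insuffisante lorsque $R$ est tr\`es petit devant $R_0$, et toute la difficult\'e consiste \`a montrer que la condition d'admissibilit\'e interdit \`a $R$ d'\^etre trop petit devant $R_0$.

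Pour cela, je consid\'ererais le plus petit entier $k\ge 1$ tel que $5^kR>R_0$, de sorte que $5^{k-1}R\le R_0<5^kR$. Pour chaque $j\in\set{0,\ldots,k-1}$, le rayon $5^jR$ appartient \`a $[R,R_0]$, et la deuxi\`eme condition d'admissibilit\'e fournit l'in\'egalit\'e $\vol B(v,5^{j+1}R)\ge \alpha\,\vol B(v,5^jR)$. En it\'erant ces $k$ in\'egalit\'es, on obtient $\vol B(v,5^kR)\ge \alpha^k\,\vol B(v,R)$. Comme $B(v,5^kR)\subset V$, le volume total majore le membre de gauche, et en r\'einjectant $\vol B(v,R)\ge A_nR^n$ il vient
$$\vol(V,\mathcal G)\ge \alpha^k\,\vol B(v,R)\ge \alpha^k A_n R^n.$$

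Il reste \`a exploiter que $5^kR>R_0$, c'est-\`a-dire $R^n>5^{-kn}R_0^n$, ce qui transforme l'in\'egalit\'e pr\'ec\'edente en $(\alpha/5^n)^k<\vol(V,\mathcal G)/(A_nR_0^n)$. Comme $\alpha>5^n$, on a $\log(\alpha/5^n)>0$ et, en passant au logarithme, on obtient exactement $k<m_0(\alpha)$. Or $k>\log_5(R_0/R)$ par d\'efinition de $k$, donc $\log_5(R_0/R)<m_0(\alpha)$, soit $R>5^{-m_0(\alpha)}R_0$. En reportant cette minoration dans $\vol B(v,R)\ge A_nR^n$, on obtient l'in\'egalit\'e annonc\'ee $\vol B(v,R)\ge 5^{-m_0(\alpha)n}A_nR_0^n=C_n(\alpha)R_0^n$. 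Le point d\'elicat est ce cha\^inage de deux contraintes oppos\'ees sur $R$ : la condition d'admissibilit\'e force une croissance g\'eom\'etrique du volume qui, confront\'ee \`a la borne $\vol(V,\mathcal G)$, limite le nombre $k$ d'\'echelons et emp\^eche donc $R$ d'\^etre trop petit ; il faut veiller \`a ce que toutes les \'echelles $5^jR$ utilis\'ees ne d\'epassent pas $R_0$, ce qui justifie le choix de $k$, tout en laissant la derni\`ere boule $B(v,5^kR)$ exc\'eder le rayon $R_0$. Le cas d'une boule de rayon $R_0$ est imm\'ediat, puisque $C_n(\alpha)\le A_n$.
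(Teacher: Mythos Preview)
Your proof is correct and follows essentially the same approach as the paper: both combine the $\eps$-regularity lower bound $\vol B(v,R)\ge A_nR^n$ with the geometric growth forced by condition~(2) of admissibility, and both extract a bound on the dyadic scale (your $k$, the paper's $m$) by comparing $\alpha^k A_n 5^{-kn}R_0^n$ to $\vol(V,\mathcal G)$. The only cosmetic difference is that the paper iterates condition~(2) downward from $R'=R_0,5^{-1}R_0,\ldots$ while you iterate upward from $R'=R,5R,\ldots$; the integers $m$ and $k$ coincide and the arithmetic is identical.
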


\begin{proof}[Démonstration du lemme \ref{lemvolbouleadmissible}]
Le cycle géométrique $(V,f,\mathcal G)$ est $\eps$-régulier, donc $\vol(B(v,R))\ge A_nR^n$. Soit $m$ un entier tel que :
$$5^{-m} R_0\le R \le 5^{-m+1}R_0.$$
On a alors :
\eq
\vol(V,\mathcal G)&\ge& \vol(B(v,5R_0))\ge \alpha\vol(B(v,R_0))\ge \alpha^2 \vol (B(v,5^{-1}R_0))\\
&\ge&\alpha^3 \vol (B(v,5^{-2}R_0))\ge\ldots\ge \alpha^{m} \vol (B(v,5^{-m+1}R_0))\\
&\ge & \alpha^{m} \vol (B(v,R))\\
&\ge & \alpha^m A_n R^n
\fineq
Ainsi $\vol(V,\mathcal G)\ge \alpha^m A_n 5^{-mn}R_0^n$, donc :
$$\log \vol(V,\mathcal G)-n\log R_0-\log A_n \ge m(\log \alpha -n\log 5).$$
Comme $\log \alpha -n\log 5>0$, on obtient : 
$$m\le \dfrac{\log \vol(V,\mathcal G)-n\log R_0-\log A_n}{\log \alpha -n\log 5}=m_0(\alpha)$$
Il en résulte que $\vol (B(v,R))\ge A_n R^n\ge 5^{-mn}A_nR_0^n\ge 5^{-m_0(\alpha)n} A_n  R_0^n$.
\end{proof}

On peut se poser naturellement la question de l'existence de boules admissibles selon la valeur de $\alpha$. Il est clair que pour $\alpha$ grand, il y aura des boules admissibles centrées en tout point de $V$. Le lemme suivant précise un peu cela.

\begin{lemm}
\label{lemexistencebouleadmissible}
Soit $\alpha>5^n$. Pour $\eps$ suffisamment petit, le cycle géométrique $\eps$-régulier $(V,f,\mathcal G)$ admet des boules $(\alpha,r)$-admissibles centrées en chacun de ses points. 
\end{lemm}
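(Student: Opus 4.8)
The plan is to fix a point $v\in V$ and to locate, among all radii between $\eps$ and $R_0=\tfrac1{12}\sys(V,f,\mathcal G)$, the largest scale at which the ball about $v$ still grows slowly. After rescaling the metric so that $\sys(V,f,\mathcal G)=1$ (admissibility and the volume ratios below are scale invariant), set $R_0=\tfrac1{12}$ and introduce the growth quotient
$$F(\rho)=\dfrac{\vol\big(B(v,5\rho)\big)}{\vol\big(B(v,\rho)\big)},\qquad \rho\in[\eps,R_0],$$
which is well defined because $\eps$-regularity gives $\vol(B(v,\rho))\ge A_n\rho^n>0$ on this range (note also $5\rho\le 5R_0<\tfrac12$, so the numerator is a genuine ball of $V$). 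I would then set
$$R=\sup\set{\rho\in[\eps,R_0]\ :\ F(\rho)\le\alpha}$$
and aim to show that $B(v,R)$ is $(\alpha,\eps)$-admissible, taking $r=\eps$.

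The heart of the argument, and the step I expect to be the main obstacle, is to show that the set $\set{\rho\in[\eps,R_0]:F(\rho)\le\alpha}$ is \emph{nonempty} once $\eps$ is small. Arguing by contradiction, suppose $F(\rho)>\alpha$ for every $\rho\in[\eps,R_0]$. Evaluating this along the geometric progression $\eps,5\eps,\ldots,5^k\eps$, where $k$ is the largest integer with $5^k\eps\le R_0$, and telescoping exactly as in Lemma \ref{lemvolbouleadmissible}, one obtains
$$\vol(V,\mathcal G)\ge\vol\big(B(v,5^k\eps)\big)>\alpha^k\,\vol\big(B(v,\eps)\big)\ge\alpha^k A_n\eps^n.$$
Since $k\ge\log_5(R_0/\eps)-1$ and $\alpha>5^n$ forces $\log_5\alpha>n$, the right-hand side is bounded below by $\alpha^{-1}A_nR_0^{n}\,(R_0/\eps)^{\log_5\alpha-n}$, which tends to $+\infty$ as $\eps\to0$. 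On the other hand $\vol(V,\mathcal G)=\sigma(V,f,\mathcal G)\le\sigma(h)+\eps$ stays bounded, by item (1) of Theorem \ref{theo-reg}. Hence for $\eps$ below a threshold depending only on $\alpha$, $n$ and $\sigma(h)$ we reach a contradiction, so the set is nonempty and $R$ is well defined.

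It then remains to check that $B(v,R)$ is indeed admissible, which is routine. The function $\rho\mapsto\vol(B(v,\rho))$ is continuous, since distance spheres carry zero volume for a piecewise Riemannian metric, so $F$ is continuous and $\set{\rho:F(\rho)\le\alpha}$ is closed; its supremum $R$ is therefore attained, giving $F(R)\le\alpha$, that is condition $(1)$. For every $\rho\in{]R,R_0]}$ we have $F(\rho)>\alpha$ by maximality of $R$, which is condition $(2)$ on $]R,R_0]$; and when $R<R_0$, letting $\rho\to R^+$ and using continuity yields $F(R)\ge\alpha$, hence $F(R)=\alpha$, so condition $(2)$ in fact holds on the whole closed interval $[R,R_0]$. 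In the boundary case $R=R_0$ the single inequality $F(R_0)\le\alpha$ is precisely the $(\alpha,r)$-admissibility condition stated for balls of radius $R_0$. In every case $B(v,R)$ is $(\alpha,\eps)$-admissible, and since $v$ was arbitrary, the $\eps$-regular cycle admits admissible balls centred at each of its points.
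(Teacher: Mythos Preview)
Your proof is correct and follows the same approach as the paper's: a telescoping argument along the geometric progression of radii, combined with the $\eps$-regularity lower bound $\vol(B(v,\rho))\ge A_n\rho^n$, shows that $F(\rho)>\alpha$ cannot hold on the whole interval once the small parameter tends to $0$, since $\alpha>5^n$. You are more explicit than the paper in selecting the admissible radius as $R=\sup\{\rho:F(\rho)\le\alpha\}$ and verifying conditions (1)--(2) via continuity of $\rho\mapsto\vol(B(v,\rho))$, whereas the paper simply assumes the negation ``no admissible ball at $v$'' directly entails $F(\rho)>\alpha$ everywhere; note also that the paper makes $r$ (rather than $\eps$) the parameter to shrink, a distinction that matters for the later application to Th\'eor\`eme~\ref{theopolyedre} (see the remark following that proof) but not for the lemma itself.
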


\begin{proof}[Démonstration du lemme \ref{lemexistencebouleadmissible}]
On raisonne par l'absurde. Supposons qu'il existe un point $v$ de $V$ tel qu'aucune boule centrée en $v$ ne soit $(\alpha,r)$-admissible. Soit $m$ la partie entière de $\tfrac{\log R_0-\log r}{\log 5}$. On a alors : $5^{-(m+1)}R_0<r\le 5^{-m}R_0$.

Le volume de la boule $B(v,R_0)$ vérifie  :
\eq
\vol (B(v,R_0))&\ge &\alpha ^m\vol(B(v,r))\ge \alpha ^mA_nr^
n\ge5^{-n}\left(5^{-n}\alpha\right)^mA_nR_0^n\\
&\ge&5^{-n}\left(5^{-n}\alpha\right)^{\log_5\frac{R_0}{5r}}A_nR_0^n.
\fineq
Comme $\underset{r\to 0}{\lim}\left(5^{-n}\alpha\right)^{\log_5\frac{R_0}{5r}}=+\infty$, cette dernière inégalité est impossible pour $r$ suffisamment petit. 
\end{proof}

\subsection{Démonstration du théorème \ref{theopolyedre}}
Rappelons que $(V,f,\mathcal G)$ est un cycle géométrique régulier qui représente la classe $h$ dans $h\in H_n(\pi;\Z)$. Choisissons $\alpha>5^n$, de sorte que l'on puisse appliquer les lemmes \ref{lemvolbouleadmissible} et \ref{lemexistencebouleadmissible}. On construit alors un \og{}système maximal\fg{} $B_1, \ldots,B_N$ de boules $(\alpha,r)$-admissibles disjointes  de $V$ de la manière suivante :

\begin{itemize}
	\item On prend un point $v_1$ de $V$ et on note $B_1$ la boule $(\alpha,r)$-admissible de plus grand rayon (que l'on note $R_1$) centrée en $v$;
\item  Supposons alors construites $B_1,\ldots,B_j$. On prend pour $B_{j+1}$ la boule $(\alpha,r)$-admissible de plus grand rayon (noté $R_{j+1}$) ne rencontrant pas les boules $B_1,\ldots,B_j$.
\end{itemize}
Ce processus de construction est correct, puisque le rayon des boules admissibles est minoré par $r$. Notons $v_1,\ldots,v_N$ les centres des boules ainsi construites. On peut noter  que les boules concentriques $2B_1,\ldots,2B_N$, de rayons respectifs $2R_1,\ldots,2R_N$, recouvrent $V$. En effet, raisonnons  par l'absurde et supposons qu'il existe $v\in V$ qui n'appartienne à aucune des boules  $2B_1,\ldots,2B_N$. On a alors, pour tout $i\in \set{1,\ldots,N}$, $\dist(v,v_i)\ge R_N$. Mais il existe une boule admissible de centre $v$ et de rayon inférieur à $R_N$, ce qui contredit la maximalité du système $B_1, \ldots,B_N$.

Désormais $\mathcal P$ désigne le nerf de ce recouvrement et $p:V\to \mathcal P$ l'application canonique induite par une partition de l'unité subordonnée à ce recouvrement. Selon le lemme \ref{lemhomotopie}, il existe $g:\mathcal P\to \K$ tel que, à homotopie près, le diagramme suivant soit commutatif.
$$\xymatrix{
 V \ar[d]_p\ar[rr]^f&&\K\\
\mathcal P\ar[rru]_{g} &&
}$$

Pour $k\in \set{0,\ldots,N}$, notons $N_k$ le nombre de simplexes de dimension $k$ de $\mathcal P$. Pour majorer $N_0$, il n'y a pas de difficulté. Avec les notations du lemme \ref{lemvolbouleadmissible}, on a :
$$\vol(V,\mathcal G)\ge \dsum_{i=1}^N\vol(B_i)\ge N_0C_n(\alpha) R_0^n.$$

Comme $R_0=\dfrac 1{12}\sys(V,f,\mathcal G)$, on obtient :
$$N_0\le \dfrac {12^n}{C_n(\alpha)}\sigma(V,f,\mathcal G).$$ 

Notons que le résultat du lemme  \ref{lemvolbouleadmissible} est encore valable en augmentant $m_0(\alpha)$. On peut donc choisir  :$$m_0(\alpha)=\dfrac{\log \vol(V,\mathcal G)-n\log R_0}{\log \alpha -n\log 5}.$$
Prenons, pour l'instant,  $\alpha$ de sorte que : $\log \alpha=n\log 5+\log \vol(V,\mathcal G) -n\log R_0.$
Il vient ainsi $m_0(\alpha)=1$ et $C_n(\alpha)=5^{-n}A_n$. On en déduit que :
 $$N_0\le 60^n A_n^{-1}\:\sigma(V,f,\mathcal G).$$

On va maintenant majorer le nombre $N_1$ de simplexes de dimension $1$ de $\mathcal N$ : il s'agit du nombre de doubles intersections entre boules du recouvrement.

Fixons $j\in \set{1,\ldots,N}$. On suppose que la boule $2B_j$ rencontre les boules $2B_{j_r}$ pour $r=1,\ldots,s_j$ et $j_r>j$. La boule concentrique $5B_j$ de rayon $5R_j$ contient alors toutes les boules $B_{j_r}$ (et c'est cette propriété qui motive la définition des boules admissibles). En effet, pour $v\in B_{j_r}$ on a  :
\eq
\dist(v,v_i)&\le &\dist(v,v_{j_r})+\dist(v_{j_r},v_j)\\
&< & R_{j_r}+4R_j\le 5R_j
\fineq

Comme la boule $B_j$ est $\alpha$-admissible, il vient :
$$\vol(B_j)\ge \alpha^{-1}\vol (5B_j)\ge \alpha^{-1}\dsum_{r=1}^{s_j}\vol(B_{j_r})$$

Ainsi, en utilisant le lemme (\ref{lemvolbouleadmissible}), on obtient : 
\eq
\vol(V,\mathcal G)&\ge& \dsum_{j=1}^m\vol (B_j)\ge \alpha^{-1}\dsum_{j=1}^m \left(\dsum_{k=1}^{s_j}\vol(B_{j_k})\right)\\
&\ge &\alpha^{-1}\dsum_{j=1}^m s_j C_n(\alpha)R_0^n\ge \alpha^{-1}C_n(\alpha)R_0^n \underbrace{\dsum_{j=1}^m s_j }_{=N_1}.
\fineq

Avec $R_0=\dfrac 1{12}\:\sys(V,f,\mathcal G)$, il vient :
\begin{equation}
\label{eqN1}
	N_1\le \dfrac{12^{n}\alpha}{C_n(\alpha)}\: \sigma(V,f,\mathcal G).
\end{equation}

Avec différents choix de $\alpha$, on peut alors borner $N_1$ de différentes manières en fonction de $\sigma(h)$. 

Prenons encore $m_0(\alpha)=\dfrac{\log \vol(V,\mathcal G)-n\log R_0}{\log \alpha -n\log 5}$ et $\alpha$ de sorte que :
\eq
\log \alpha&=&n\log 5+\sqrt{\log \vol(V,\mathcal G) -n\log R_0}\\
&=&n\log 5+\sqrt{\log \sigma(V,f,\mathcal G) +n\log 12}.
\fineq
Il vient $m_0(\alpha)=\left(\log \sigma(V,f,\mathcal G) +n\log 12\right)^{\frac 12}$. Avec (\ref{eqN1}) on obtient :
$$N_1\le60^n A_n^{-1} \sigma(V,f,\mathcal G)\: \exp \big((1+n\log 5)\sqrt{\log \sigma(V,f,\mathcal G) +n\log 12}\big).$$

Comme $\sigma(V,f,\mathcal G)\le (1+\eps)\sigma(h)$, on obtient, en faisant tendre $\eps$ vers $0$ ($N_1$ étant indépendant de $\eps$) :
 $$N_1\le 60^n A_n^{-1} \sigma(h) \exp\big((1+n\log 5)\sqrt{\log \sigma(h) +n\log 12}\big).$$

Regardons maintenant comment se majore $N_2$. Fixons $j$ dans $\set{1,\ldots,N}$. Pour $j_r>j$ avec $r\in \set{1,\ldots,s_j}$, on suppose que les boules $B_j$, $B_{j_r}$ et $B_{{j_r}_k}$ où $j_{r_k}>j_r$ et $k\in \set{1,\ldots,s_{j_r}}$ ont une intersection non vide.
Comme précédemment, la boule $B(j,5R_j)$ contient toutes les boules $B_{j_r}$, et chaque boule  $5B_{j_r}$ contient toutes les boules $B_{{j_r}_k}$. Il vient alors :
\eq
\vol(V,\mathcal G)&\ge &\dsum_{j=1}^N \vol(B_j)\ge \alpha^{-1}\dsum_{j=1}^N \vol(5B_j)\ge \alpha^{-1}\dsum_{j=1}^N \dsum_{r=1}^{s_j}\vol (B_{j_r})\\
&\ge &\alpha^{-2}\dsum_{j=1}^N \dsum_{r=1}^{s_j}\vol (5B_{j_r})\ge \alpha^{-2}\dsum_{j=1}^N  \dsum_{r=1}^{s_j}\dsum_{k=1}^{s_{j_r}}\vol (B_{{j_r}_k})\\
&\ge & \alpha^{-2}\dsum_{j=1}^N  \dsum_{r=1}^{s_j}\dsum_{k=1}^{s_{j_r}}C_n(\alpha)R_0^n\ge \alpha^{-2}C_n(\alpha)R_0^n\underbrace{\dsum_{j=1}^N  \dsum_{r=1}^{s_j}s_{j_r}}_{=N_2}.
\fineq

Il vient donc $N_2\le \dfrac{12^n\alpha^2}{C_n(\alpha)}\: \sigma(V,f,g)$. De la même manière, on obtient $N_k\le \dfrac{12^n\alpha^k}{C_n(\alpha)}\: \sigma(V,f,g)$, ce qui donne :
$$N_k\le 12^n\alpha^k\ex{n\log 5\:m_0(\alpha)}\: \sigma(V,f,g).$$

Avec la  même valeur de $m_0(\alpha)$ et de $\alpha$, il vient :
$$\alpha^k=5^{kn}\:\exp\big(k\sqrt{\log \vol(V,\mathcal G) -n\log R_0}\big),$$
d'où : $N_k\le \left(12.5^{k}\right)^n A_n^{-1} \sigma(h) \exp\big((k+n\log 5)\sqrt{\log \sigma(h) +n\log 12}\big)$.\\

Cette formule étant encore valable pour $k=0$, le théorème \ref{theopolyedre} est ainsi démontré.

\begin{rema}
On ne peut remplacer $r$ par $\eps$ dans la définition  des boules $(\alpha,r)$-admissibles. En effet, le nombre $N_k$ de simplexes de dimension $k$  de $\mathcal P$ dépend de $\alpha$ et de $r$. Or, pour obtenir le résultat final, on fait tendre $\eps$ vers $0$.
\end{rema}

\subsection{Une majoration des nombres de Betti d'une variété asphérique}
Le théorème \ref{theopolyedre} permet de majorer les nombres de Betti d'une variété fermée asphérique $M$ en fonction de son volume systolique (comparer avec \cite{Gromov-FRM}, page 75). Rappelons qu'un espace topologique $X$ est dit asphérique lorsque pour $i\ge 2$ on a  $\pi_i(X)=0$. \index{Variété asphérique}\\

\begin{theo}[Gromov]
Soit $M$ une variété fermée de dimension $n$, orientable et asphérique. Les nombres de Betti de $M$ vérifient, pour $k\in \set{0,\ldots,n}$ :
$$b_k(M)\le C\sigma(M)\exp(C'\sqrt{\log \sigma(M)}),$$où $C_n$ et $C'_n$ sont deux constantes universelles qui ne dépendent que de la dimension.
\end{theo}
\begin{proof}[Démonstration]
Soit $\pi$ le groupe fondamental de $M$. Puisque $M$ est asphérique, on a $M=\K$. Selon le théorème \ref{theopolyedre}, il existe un polyèdre $\mathcal P$ et une application $\phi:\mathcal P\to M$ tels que :
\begin{itemize}
	\item le nombre $N_k$ de simplexes de dimension $k$ de $\mathcal P$ vérifie :
\begin{equation}
\label{eqnombresimplexes}
		N_k\le  C\sigma(M)\exp\big(C'\sqrt{\log \sigma(M)}\big),
\end{equation}
	où $C_n$ et $C'_n$ sont deux constantes universelles qui ne dépendent que de $n$;
	\item l'application induite en homologie par $\phi$ envoie une certaine classe $h'$ de $H_n(\mathcal P;\Z)$ sur la classe fondamentale $[M]$ de $M$.
\end{itemize}

Fixons $k$ dans $\set{0,\ldots,n}$. On considère le \og{}cup-product\fg{} 
$$H^{k}(M;\R)\times H^{n-k}(M;\R)\to H^{n}(M;\R),$$
ainsi que la \og{}forme bilinéaire\fg{} $\psi_k: H^{k}(M;\R)\times H^{n-k}(M;\R) \to \R$ définie, pour $(\alpha,\beta)$ dans  $H^{k}(M;\R)\times H^{n-k}(M;\R)$, par :
$$\psi_k(\alpha,\beta)=\alpha\cupp\beta([M])\in \R.$$
Rappelons que  $H^k(M;\R)$ est le dual algébrique de $H_k(M;\R)$. Pour $\beta$ dans $H^{n-k}(M;\R)$ considérons $$\psi_k^{\beta}:\alpha \mapsto \psi_k(\alpha,\beta)=\alpha\cupp\beta([M]),$$
qui est une forme linéaire sur $H^k(M;\R)$. On dispose ainsi d'une application linéaire :
$$\varphi_k:H^{n-k}(M;\R)\to H^k(M;\R)^*,$$
définie par $\varphi_k(\beta)=\psi_k^{\beta}$, où $H^k(M;\R)^*$ est le dual algébrique de $H^k(M;\R)$. On peut alors définir le rang de $[M]$ par :
$$\rg_k([M])=\rg(\varphi_k).$$
Ce rang dépend bien sûr de $k$, et on peut définir de la même manière le rang d'une classe homologique quelconque de $H_n(M;\R)$ ou de $H_n(\mathcal P;\R)$. On a alors : 
\begin{equation}
\label{eqrang}
	\rg_k([M])\le \rg_k(h')\le \dim H^k(P;\R)\le N_k.
\end{equation}
Désignons maintenant par $D:H^k(M;\R)\to H_{n-k}(M;\R)$ l'isomorphisme de Poincaré (voir \cite{Hatcher}). Si $\capp$ désigne le \og{}cap-product\fg{} :
$$\capp:H_n(M;\R)\times H_k(M,\R)\to H_{n-k}(M;\R),$$ 
on a $D(\alpha)=[M]\capp\alpha$. Soit  $\trans D:H_{n-k}(M)^* \to H^k(M)^*$ la transposée de $D$. L'application $\trans D$ est un isomorphisme et, pour $\alpha\in H^k(M;\R)$ et $\beta\in H^{n-k}(M;\R)$, on dispose de la relation classique (\cite{Hatcher} page 249) :
$$\alpha\cupp\beta([M])=\beta([M]\capp(\alpha))=\beta\circ D(\alpha)=\trans D(\beta)(\alpha).$$
Ainsi $\varphi_k=\trans D$ : c'est un isomorphisme, donc $\rg(\varphi_k)=b_k(M)$. On peut alors conclure en combinant (\ref{eqnombresimplexes}) et (\ref{eqrang}).
\end{proof}

\begin{rema}
Pour une classe homologique quelconque $h\in H_n(\pi;\R)$, la démonstration ci-dessus permet de dire que :
$$\rg_k(h)\le  C_n\sigma(h)\exp(C'_n\sqrt{\log \sigma(h)}),
$$
où $C_n$ et $C'_n$ sont deux constantes universelles qui ne dépendent que de $n$.
\end{rema}

\section{Entropie volumique et  volume systolique}

\subsection{Introduction}
Le but de ce paragraphe est de présenter comment la technique de régularisation permet d'obtenir des relations liant  l'entropie volumique et le volume systolique. Avant de voir ces relations, il est nécessaire de rappeler quelques notions. Dans la suite, toutes les variétés considérées sont compactes et sans bord. On verra aussi une relation entre l'entropie volumique, le volume systolique et le quotient embolique d'une variété compacte.

\subsection{Entropie volumique et entropie volumique minimale d'une variété}
Passons maintenant aux  notions d'entropie volumique et d'entropie volumique minimale. Soit $(M,g)$  une variété riemannienne  compacte sans bord. On fixe $m_0\in M$ et $\w m_0\in \w M$ tel que $p(\w x_0)=m_0$. On notera  dans la suite $p:\w M \to M$ son revêtement universel riemannien et on identifiera $\pi_1(M,m_0)$ et le groupe des automorphismes du revêtement.

\begin{defi}
\label{Entropievol}
\textit{L'entropie volumique}\index{Entropie volumique} de $(M,g)$ est :
$$h_{\text{vol}}(g)=\underset{R\to +\infty}{\lim}\dfrac{\log(\vol (B(\w m_0,R))}{R},$$
où $B(\w m_0,R))$ désigne la boule de centre $\w m_0$ et de rayon $R$ dans $\w M$.
\end{defi}

Cette limite existe bien, par compacité de $M$, et ne dépend pas du choix du point $\w m_0$ (voir \cite{Manning}).

\begin{defi} Soit $M$ une variété  compacte sans bord de dimension $n\ge 2$. \textit{L'entropie volumique minimale}\index{Entropie volumique minimale} de $M$ est :
 $$\omega(M)=\underset{g}{\inf}\ h_{\text{vol}}(M,g) \vol(M,g)^{\frac{1}{n}},$$
 l'infimum étant pris sur toutes les métriques riemanniennes sur $M$.
\end{defi}

\begin{rema}
\begin{enumerate}
	\item L'entropie volumique minimale est un invariant du type d'homotopie de $M$. Cela est démontré par Babenko dans  \cite{Babenko-AISM}.
	\item Lorsque $M$ admet une métrique localement symétrique $g_0$ de courbure négative alors :
	$$\omega(M)=h_{\text{vol}}(M,g_0)\vol(M,g_0)^{\frac{1}{n}}.$$
	Le cas $n=2$ a été démontré par Katok (voir \cite{Katok2}) et le cas $n=3$ par Besson, Courtois et Gallot (voir \cite{BCG}).
	\item Gromov a relié l'entropie volumique minimale au volume simplicial $\norm{M}$ d'une variété $M$ de dimension $n$, où $\norm{M}=\inf\set{\dsum \mod{r_i}}$, l'infimum étant pris sur tous les cycles réels $\dsum r_i\sigma_i$ qui représentent la classe fondamentale de $M$. Plus précisément, il a montré dans \cite{Gromov-VBC} qu'il existe une constante $C_n$ qui ne dépend que de la dimension de $M$ telle que :
	$$\omega(M)\ge C_n\norm{M}.$$
	\item Dans \cite{Brunnbauer-HI}, Brunnbauer a montré (par exemple) que, pour une variété orientable $M$, de dimension $n$ de groupe fondamental $\pi$, alors $\omega(M)$ ne dépend que de la valeur de $f_*[M]\in H_n(\pi;\Z)$ où $f_*$ est l'application induite en homologie par l'application classifiante $f:M\to K(\pi,1)$. 
	\item Manning a démontré que l'entropie volumique de la variété $(M,g)$ est reliée à l'entropie topologique $h_{top}(M,g)$ du flot géodésique de $(M,g)$ (voir \cite{Manning} et \cite{Katok-Hasselblatt}) par l'inégalité :
	$$h_{vol}(M,g)\le h_{top}(M,g).$$
De plus il y a égalité lorsque $(M,g)$ est à courbure négative.
\end{enumerate}
\end{rema}
\subsection{Entropie volumique relative}
On peut définir différentes entropies volumiques relatives de la manière suivante. Soit $(M,g)$ une variété riemannienne compacte de groupe fondamental $\pi$. Il est possible, pour tout sous groupe distingué $G$ de $\pi$, de définir l'entropie volumique de $M$ associée à $G$ en considérant le revêtement galoisien $\w M\to M$ associé à $G$, au lieu du revêtement universel de $M$, dans la définition \ref{Entropievol} (voir \cite{Babenko-AISM}). Notons $h_{\text{vol}}(M,g,G)$ l'entropie ainsi définie\index{Entropie volumique relative}. En particulier, on a :  
$h_{\text{vol}}(M,g,\pi)=h_{\text{vol}}(M,g)$.\\
\textit{L'entropie volumique minimale associée à} $G$, est alors (par définition) :
 $$\omega_G(M)=\underset{g}{\inf} \:  h_{\text{vol}}(M,g,G),$$ 
 l'infimum étant pris sur toutes les métriques riemanniennes sur $M$ de volume 1.
  
Parmi ces entropies minimales, on retrouve $\omega(M)$. Il y aussi \textit{l'entropie minimale homologique}  $\omega_H(M)$ associée au sous-groupe  $H=\ker \psi$ de $\pi$, où $\psi:\pi_1(M)\to \quot{H_1(M)}{\text{Tor}}$. Babenko a démontré,  dans \cite{Babenko-AISM}, que toutes ces entropies minimales sont des invariants homotopiques de $M$ et que, par exemple, dans le cas où $M$ est orientable, lorsque  l'application classifiante
$f:M\to K(\pi_1(M),1)$ vérifie $f_*[M]=0$, alors $\omega_G(M)=0$ pour tout sous-groupe distingué $G$ d'indice infini dans $\pi_1(M)$.

Soient maintenant $\pi$  un groupe de présentation finie, $h\in H_n(\pi;\Z)$ et $(V,f,g)$ un cycle géométrique représentant $h$. On peut définir (voir \cite{Sabourau-SVME}) de la même manière l'entropie volumique du cycle géométrique $(V,f,g)$ en regardant la croissance du volume des boules dans le revêtement galoisien $\w V\to V$ associé au sous-groupe distingué $\ker f_*$ de $\pi_1(V)$:
$$h_{vol}(V,f,g)=\underset{R\to +\infty}{\lim}\dfrac{\log(\vol (B(\w m_0,R))}{R},$$
où $\w m_0\in\w V$. On a alors le résultat suivant :

\begin{theo}[Sabourau]
\label{theoSabourauIllus1}
Pour tout cycle géométrique $(V,f,g)$ $\eps$-régulier qui représente une classe d'homologie non triviale dans $H_n(\pi;\Z)$, on a :
$$h_{vol}(V,f,g)\le \dfrac{1}{\beta\: \sys(V,f,g)}\log\left(\dfrac{\sigma(V,f,g)}{A_n\alpha^n}\right)$$
où $\alpha\ge \eps$, $\beta>0$, $4\alpha+\beta<\tfrac 12$ et la constante $A_n$ est donnée par le théorème \normalfont{A}.
\end{theo}

On va voir dans la suite la démonstration de ce résultat. Mais auparavant, on a besoin de rappeler quelques résultats sur l'entropie.

\begin{rema}
On peut définir l'entropie volumique minimale de $h$ par :
$$\omega(h)=\underset{(X,f,g)}{\inf} h_{vol}(V,f,g)\vol(V,\mathcal G)^{\frac 1n},$$
où $(V,f,g)$ décrit l'ensemble des cycles géométriques qui représentent $h$.
L'inégalité du théorème \ref{theoSabourauIllus1} donne alors, pour $\alpha>0$ et $\beta>0$ qui vérifient  $4\alpha+\beta<\tfrac 12$ :
$$\omega(h)\le \dfrac{\sigma(h)^{\frac 1n}}{\beta}\log\left(\dfrac{\sigma(h)}{A_n\alpha^n}\right).$$

\end{rema} 

\subsection{Entropie volumique et classes d'homotopie des lacets}

Avant de démontrer le théorème \ref{theoSabourauIllus1}, rappelons une autre définition de l'entropie volumique d'une variété riemannienne compacte, qui est reliée au nombre $P_{m_0}(L)$ de classes d'homotopie de lacets de point base $m_0$ ayant un représentant de longueur inférieure à $L$. Soit  $(M,g)$ une variété riemannienne compacte de groupe fondamental $\pi$. On note $\w \dist$ la distance induite par celle de $M$ sur son revêtement universel riemannien $\w M$. Dans la suite, on désignera par $\Delta$ un domaine fondamental de l'action de $\pi$ sur $\w M$ et on désignera par $\mathcal H(m_0,L)$ l'ensemble des classes d'homotopie de point base $m_0\in M$ qui peuvent être représentées par un lacet de longueur inférieure à $L$. Je rappelle le résultat bien connu suivant (voir pas exemple \cite{Katz-SG}).

\begin{theo}
\label{entropiecomptage}
Soient  $(M,g)$  une variété riemannienne compacte et  $m_0\in M$. On a alors :
$$h_{\text{vol}}(M,g)=\underset{L\to +\infty}{\lim}\dfrac{\log(P_{m_0}(L))}{L}.$$
\end{theo}

On commence par montrer le lemme suivant.

\begin{lemm}
\label{lemmeclassehomotopie}
On a :
$$P_{m_0}(L)=\text{\normalfont{card}}\set{\gamma\in \pi\mid \w \dist(\w m_0,\gamma\cdot \w  m_0)\le L}.$$
\end{lemm}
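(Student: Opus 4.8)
The plan is to set up the standard correspondence between based homotopy classes of loops and deck transformations, and to show that under this correspondence the infimum of lengths in a class equals the distance in the universal cover. First I would recall that the metric on $\w M$ is the pullback of $g$ by the covering map $p$, so that $p$ is a local isometry and both lifting and projecting a path preserve its length. Under the usual identification of $\pi_1(M,m_0)$ with the deck transformation group $\pi$, a loop $c$ based at $m_0$ lifts to a path $\w c$ issued from $\w m_0$ whose endpoint lies in the fibre $p^{-1}(m_0)$ and depends only on the class of $c$; writing this endpoint as $\gamma\cdot\w m_0$ defines $\gamma=\gamma_{[c]}$, and $[c]\mapsto\gamma_{[c]}$ is precisely the isomorphism $\pi_1(M,m_0)\to\pi$.

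Next I would exploit this lifting correspondence in both directions. Any loop $c$ based at $m_0$ in the class of $\gamma$ lifts to a path from $\w m_0$ to $\gamma\cdot\w m_0$ of the same length, and conversely any path in $\w M$ from $\w m_0$ to $\gamma\cdot\w m_0$ projects to a loop based at $m_0$ in the class of $\gamma$, again of the same length. Hence the set of lengths of loops in the class of $\gamma$ coincides with the set of lengths of paths in $\w M$ joining $\w m_0$ to $\gamma\cdot\w m_0$, and passing to infima gives the key equality
$$\inf\set{\Long(c)\mid [c]\leftrightarrow\gamma}=\w\dist(\w m_0,\gamma\cdot\w m_0).$$

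The remaining point, which is where a little care is needed, is to convert this equality of infima into a statement about the existence of a representative of length $\le L$. Here I would invoke the completeness of $\w M$: being the universal cover of a compact manifold, $\w M$ is complete, so by Hopf--Rinow the distance $\w\dist(\w m_0,\gamma\cdot\w m_0)$ is realized by a minimizing geodesic. Projecting that geodesic produces a loop based at $m_0$, in the class of $\gamma$, whose length is exactly $\w\dist(\w m_0,\gamma\cdot\w m_0)$. It follows that the class of $\gamma$ admits a representative of length $\le L$ if and only if $\w\dist(\w m_0,\gamma\cdot\w m_0)\le L$: the direct implication is immediate, since the length of any representative bounds the infimum from above, and the converse is supplied by the projected minimizing geodesic.

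Finally, since $[c]\mapsto\gamma_{[c]}$ is a bijection from $\pi_1(M,m_0)$ onto $\pi$, the previous equivalence shows that it restricts to a bijection between $\mathcal H(m_0,L)$, the set of classes with a representative of length $\le L$ counted by $P_{m_0}(L)$, and $\set{\gamma\in\pi\mid \w\dist(\w m_0,\gamma\cdot\w m_0)\le L}$. Comparing cardinalities then yields the asserted identity. I expect the main obstacle to be precisely the attainment step: one must ensure that the condition $\w\dist\le L$ genuinely provides a representative of length $\le L$, rather than only loops whose lengths approach the infimum, and this is exactly where the completeness of $\w M$ is essential.
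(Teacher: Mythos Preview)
Your argument is correct and follows essentially the same route as the paper: both set up the bijection $[c]\mapsto\gamma$ via path lifting, use that lifting and projection preserve length, and handle surjectivity by projecting a minimizing geodesic in $\w M$ from $\w m_0$ to $\gamma\cdot\w m_0$. The only difference is cosmetic: you explicitly invoke completeness and Hopf--Rinow to justify the existence of that minimizing geodesic, whereas the paper simply takes it for granted.
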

\begin{proof}[Démonstration du lemme \ref{lemmeclassehomotopie}]
Tout lacet $c$ de point base $m_0$ dans $M$ se relève en un unique chemin $\w c$ dans $\w M$ d'origine $\w m_0$. Si on prend maintenant deux lacets $c_1$ et $c_2$ de point base $m_0$ dans $M$, ces deux lacets sont homotopes si et seulement si les chemins $\w c_1$ et $\w c_2$ (qui ont pour origine $\w m_0$) ont mêmes extrémités (propriété de relèvement des homotopies) i.e. si et seulement s'il existe $\gamma\in \pi$ tel que 
$$\w c_1(1)=\w c_2(1)=\gamma \cdot \w m_0.$$

Soit $H\in \mathcal H(m_0,L)$, représentée par un lacet $c:[0,1]\to M$ de longueur inférieure à $L$. On associe à cette classe l'unique élément $\gamma\in \pi$ tel que $\w c(1)=\gamma \cdot\w m_0$. On a alors :  
$$\w \dist (\w m_0,\gamma\w m_0)\le \text{long}(\w c)=\text{long}(c)=L.$$

On a donc correctement défini une application clairement injective :
$$\phi:\mathcal H(m_0,L)\to \set{\gamma\in \pi\mid \w \dist(\w m_0,\gamma\cdot \w  m_0)\le L}.$$

Il reste à montrer que $\phi$ est surjective. Prenons $\gamma\in \pi$ tel que $\w \dist(\w m_0,\gamma\cdot\w m_0)\le L$.
Soit $\w c$ une géodésique minimisante de $\w m_0$ à $\gamma\cdot\w m_0$. Le lacet $c=p\circ \w c$ (qui est de point base $m_0$) est de longueur inférieure à $L$ et $\gamma$ est l'image de la classe d'homotopie de $c$ par $\phi$.  
\end{proof}

\begin{proof}[Démonstration du théorème \ref{entropiecomptage}]
Soit $D$ le diamètre de $\Delta$ ; c'est aussi le diamètre de tout translaté de $\Delta$ par un élément de $\pi$, puisque $\pi$ agit par isométries sur $\w M$. Posons $N_1(L)=\text{card}\set{\gamma\in \pi\mid \gamma\cdot\Delta\subset \overline B (\w m_0,L)\neq\emptyset }$ où $\overline B (\w m_0,L)$ désigne la boule fermée de centre $\w m_0$ et de rayon $L$. On a :
$$N_1(L)\le \text{\normalfont{card}}\set{\gamma\in \pi\mid \w \dist(\w m_0,\gamma\cdot \w  m_0)\le L}=P_{m_0}(L).$$
Or si $\w m\in B (\w m_0,L-D)$, il existe $\w p\in \Delta$ et $\gamma\in \pi$ tel que $\w m=\gamma\cdot\w p$, donc $\w m\in \gamma\cdot \Delta$ et $\gamma\cdot \Delta\subset\overline B (\w m_0,L)$. Il en résulte que $N_1(L)\:\vol(\Delta,\w g)\ge \vol (B (\w m_0,L-D))$ et ainsi :
$$P_{m_0}(L)\ge \dfrac{\vol (B (\w m_0,L-D))}{\vol(M,g)}.$$
En passant au logarithme naturel, en divisant par $L$ et en passant à la limite inf lorsque $L$ tend vers $+\infty$ :
\begin{equation}
\label{liminf}
\underset{L\to +\infty}{\text{lim\:inf}}\:\dfrac{\log P_{m_0}(L)}{L}\ge  h_{vol}(M,g)
\end{equation}
Posons maintenant $N_2(L)=\text{card}\set{\gamma\in \pi\mid  \gamma\cdot\Delta\subset \overline B (m_0,L+D)}$. On a :
$$\text{\normalfont{card}}\left\{\pi\cdot\w m_0  \cap \overline B(\w m_0,L)\neq \varnothing\right\}\le N_2(L).$$
Mais  $N_2(L)\:\vol(\Delta,\w g)\le \vol(B(\w m_0,L+D))$. Il en résulte que $P_{m_0}(L)\le \dfrac{\vol(B(\w m_0,L+D))}{\vol(M,g)}$, et ainsi :
\begin{equation}
\label{limsup}
	\underset{L\to +\infty}\limsup \dfrac{\log P_{m_0}(L)}{L}\le h_{vol}(M,g)
\end{equation}
Avec (\ref{liminf}) et (\ref{limsup}), on obtient le résultat souhaité.
\end{proof}

\begin{rema}
Le théorème \ref{entropiecomptage} est encore valable pour l'entropie relative $h_{vol}(V,f,g)$ d'un cycle géométrique, la démonstration en est identique.
\end{rema}

\subsection{Démonstration du théorème \ref{theoSabourauIllus1}}

C'est le modèle type d'illustration de la technique de régularisation. Je vais suivre \cite{Sabourau-SVME}. On établit d'abord le résultat suivant, qui nous conduira directement à la preuve du théorème \ref{theoSabourauIllus1} .

\begin{lemm}[Sabourau]
\label{theointermediareentropieSabourau}
Soient $\pi$ un groupe de présentation finie, $(V,f,g)$ un cycle géométrique représentant une classe $h\in H_n(\pi;\Z)$. Pour  $\alpha>0$, soit $(B_1,\ldots,B_{N\alpha})$ un système maximal de boules ouvertes disjointes de $M$ de rayon 
\begin{equation}
	\label{defR}
	R_\alpha\le \alpha\:\text{sys}(V,f,g).
\end{equation}
Lorsque $\beta>0$ vérifie $\alpha+2\beta<\tfrac 12$, on a:
 $$h_{\text{\vol}}(V,f,g)\: \sys (V,f,g)\le \dfrac{\log N_\alpha}{\beta}.$$
\end{lemm}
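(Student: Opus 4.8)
The plan is to compute the entropy through the counting function of Theorem~\ref{entropiecomptage} and to bound the number of group elements reachable within distance $L$ by a combinatorial coding on the covering balls. By the remark following that theorem it applies to geometric cycles, so
$$h_{\vol}(V,f,g)=\underset{L\to+\infty}{\lim}\dfrac{\log P_{m_0}(L)}{L}.$$
By Lemma~\ref{lemmeclassehomotopie}, $P_{m_0}(L)=\text{card}\set{\gamma\mid \w\dist(\w m_0,\gamma\cdot\w m_0)\le L}$, where $\gamma$ runs over the deck group of the Galois cover $\w V\to V$ attach\'ee \`a $\ker f_*$. Write $S=\sys(V,f,g)$. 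Since $(B_1,\dots,B_{N_\alpha})$ est maximal, the doubled balls cover $V$, hence their lifts cover $\w V$.

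Next, for each $\gamma$ with $\w\dist(\w m_0,\gamma\cdot\w m_0)\le L$ I would fix a minimizing geodesic and cut it into consecutive marked points $\w y_0=\w m_0,\dots,\w y_k=\gamma\cdot\w m_0$ at arclength spacing at most $\beta S$, so that $k\le\dfrac{L}{\beta S}+1$. To each $\w y_j$ I attach the index $i_j\in\set{1,\dots,N_\alpha}$ of a covering ball whose lift contains it, and I call $(i_0,\dots,i_k)$ the \emph{code} of $\gamma$. There are at most $N_\alpha^{\,k+1}$ possible codes.

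The crux is to prove that the code determines $\gamma$. I would reconstruct the sequence of lifted centers inductively: $\w m_0$ pins down the first lift, and, the center $v_{i_{j+1}}$ being known, the correct lift $\w v_{j+1}$ is the one lying within a controlled distance of the previously reconstructed lift $\w v_j$. This lift is \emph{unique} because two distinct lifts of a single point of $V$ differ by a nontrivial deck transformation, whose displacement is at least $S$ (the systole being the length of the shortest loop $c$ with $f\circ c$ non contractile). The arithmetic hypothesis $\alpha+2\beta<\tfrac12$ is exactly what forces the distance estimate fed into this step to stay below the uniqueness threshold: the displacement of centers over one arc is bounded by the radius contribution (of order $\alpha S$, since $R_\alpha\le\alpha S$) together with the arc and re-centering contributions (of order $\beta S$), and $\alpha+2\beta<\tfrac12$ keeps twice this quantity below $S$. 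Thus the reconstruction is well defined, $\gamma\cdot\w m_0=\w v_k$ up to a bounded error, and the map $\gamma\mapsto(i_0,\dots,i_k)$ is injective. I expect this rigidity step to be the main obstacle, since it is where the geometry, through the systolic lower bound on displacements, must be matched with the combinatorics of the coding.

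Finally, injectivity gives $P_{m_0}(L)\le N_\alpha^{\,k+1}\le N_\alpha^{\,L/(\beta S)+1}$, whence
$$h_{\vol}(V,f,g)=\underset{L\to+\infty}{\lim}\dfrac{\log P_{m_0}(L)}{L}\le\dfrac{\log N_\alpha}{\beta S},$$
and multiplying by $S=\sys(V,f,g)$ yields the announced inequality.
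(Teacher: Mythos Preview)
Your approach is essentially the paper's, rephrased in the covering space. The paper works downstairs: it samples the loop $c$ at spacing $L_\beta=\beta\,\sys(V,f,g)$, replaces each sample $m_k$ by the nearby ball center $p_k$, and shows that the ``detour'' loops
\[
\alpha_k=c_k\ast[m_{k+1},p_{k+1}]\ast[p_{k+1},p_k]\ast[p_k,m_k]
\]
have length $\le 2(\beta+4\alpha)\,\sys(V,f,g)$, hence $f\circ\alpha_k$ is contractible. Thus the concatenated path $c'=[m_0,p_1]\ast\cdots\ast[p_s,m_0]$ satisfies $f\circ c'\simeq f\circ c$, and the map $c\mapsto(p_1,\dots,p_s)$ is injective on classes in $\pi_1(V)/\ker f_*$. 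Your lift-reconstruction argument is exactly the upstairs translation of this: ``$f\circ\alpha_k$ contractible'' is the same as ``the next lifted center is uniquely determined''. So there is no genuine difference in strategy.

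One point to tighten: your arithmetic justification of the hypothesis is too vague and, as written, does not give $\alpha+2\beta<\tfrac12$. The honest estimate is
\[
\w\dist(\w v_j,\w v_{j+1})\le 2R_\alpha+\beta S+2R_\alpha\le(4\alpha+\beta)S,
\]
and uniqueness of the next lift requires $2(4\alpha+\beta)<1$, i.e.\ $4\alpha+\beta<\tfrac12$. This is the condition that actually appears in the paper's computation (and in Theorem~\ref{theoSabourauIllus1}); the hypothesis $\alpha+2\beta<\tfrac12$ in the lemma statement is a misprint. So your ``rigidity step'' is correct in spirit, but you should write out the distance bound explicitly rather than speaking of contributions ``of order'' $\alpha S$ and $\beta S$.
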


\begin{proof}[Démonstration]
Notons que les boules concentriques $2B_1,\ldots2B_{N_\alpha}$ de rayon $2R$ recouvrent $V$. Pour $i\in \set{1,\ldots,N_\alpha}$, on note $x_i$ le centre de la boule $B_i$ et on pose, $m_0=x_1$. 

Soit $c:[0,L]\to V$ un lacet paramétré par la longueur d'arc avec $m_0=c(0)$, tel que $f\circ c$ ne soit pas un lacet contractile dans $\K$ . Soit $\beta>0$. On pose :
\begin{equation}
\label{Lbeta}
	L_\beta=\beta\:\sys(V,f,g)
\end{equation}

Supposons que $L_\beta\le L$. Pour $k$ entier naturel tel que $kL_\beta\le L$, on considère les points $m_k=c(kL_\beta)$.

Notons $s$ le plus grand des entiers $k$ tels que $kL_\beta\le L$. Soit $i\in \set{0,\ldots ,s}$. Le point $m_i$ est recouvert : il existe donc $j_i\in \set{1,\ldots ,N_\alpha}$ tel que $m_i\in B(x_{j_i},2R)$. On pose $p_i=x_{j_i}$, et on a donc $\dist(m_i,p_i)<2R$.

\begin{figure}[h]
\begin{center}
\includegraphics[width=0.5\linewidth]{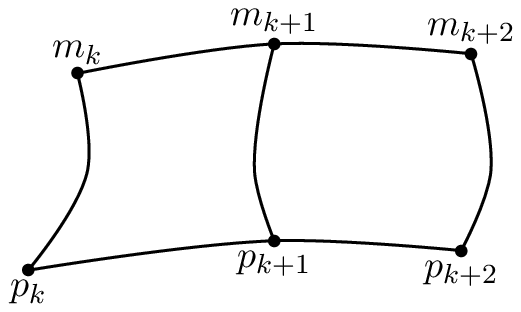}
\end{center}
\end{figure}

Pour $m$ et $q$ points de $M$, on notera $[m,q]$ un plus court chemin de $m$ à $q$. Pour chaque $k\in \left\{1,\ldots,s-1\right\}$, on note $c_k$ la restriction de $c$ au segment $[kL_\beta,(k+1)L_\beta]$, et on considère le lacet $\alpha_k$  défini par :
$$\alpha_k=c_k\ast[m_{k+1}, p_{k+1}]\ast [p_{k+1},p_k]\ast [p_{k},m_k],$$
où $\ast$ désigne la concaténation des chemins. Bien sûr, il peut y avoir plusieurs plus courts chemins de $m_k$ à $p_k$ : on choisit le même chemin pour la construction des lacets $\alpha_k$ et $\alpha_{k-1}$. Il vient alors :
\begin{eqnarray*}
	\Long(\alpha_k)&=& \Long(c_k)+\dist(m_{k+1},p_{k+1})+\dist(p_{k+1},p_k)+\dist(m_k,p_k)\\
	&\le &L_\beta+4R_\alpha+\dist(p_k,p_{k+1})\\
	&\le &2(L_\beta+4R_\alpha)\\
		&\le &2(\beta+4\alpha)\sys(V,f,g)) 
\end{eqnarray*} 

Ainsi, lorsque  $2(\beta+4\alpha)<1$, l'image du lacet  $\alpha_k$ par l'application $f:V\to \K$ est contractile. Il en va de même pour les lacets $$\alpha_0=c_0\ast[m_1,p_1]\ast[p_1,m_0]\et \alpha_s=c_s\ast[m_s,p_s]\ast[p_s,m_0].$$

Considérons alors le lacet  
\begin{equation}
\label{constructionlacet}
	c'=[m_0,p_1]\ast[p_1,p_2]\ast\cdots\ast[p_s,m_0],
\end{equation}
D'après ce qui précède, le lacet $f\circ c'$ est homotope au lacet  $f\circ c$. De plus, deux lacets  $c_1$ et $c_2$ de point base $m_0$ qui représentent  des éléments distincts de $\quot{\pi_1(V)}{\ker f_*}$ permettent de construire, par (\ref{constructionlacet}), deux lacets  $c_1'$ et $c_2'$ qui représentent encore des éléments distincts de $\quot{\pi_1(V)}{\ker f_*}$.

Or il y a $N_\alpha^s$  lacets distincts construits à partir de la formule (\ref{constructionlacet}) (en choisissant, bien entendu, le même chemin minimisant entre les centres des boules du recouvrement).\\     

Ainsi : $P_{m_0}(L)\le N_\alpha^s$. Comme $s\le \dfrac{L}{\beta\: \sys(M,g)}$, il vient alors :$$\log P_{m_0}(L)\le s\log N_\alpha\le \dfrac{L}{\beta\: \sys(V,f,g)} \: \log N_\alpha,$$
ce qui prouve le lemme \ref{theointermediareentropieSabourau}.
\end{proof} 

De ce lemme, découle immédiatement la preuve du théorème \ref{theoSabourauIllus1}. Prenons $R_\alpha=\alpha \:\sys(V,f,g)$. Supposons que $(V,f,g)$ soit un cycle $\eps$-régulier. Puisque $\alpha<\tfrac 12$, le théorème \ref{theo-reg} page \pageref{theo-reg} s'applique : pour tout $i\in \set{1,\ldots,N_\alpha}$ on a $\vol (B_i)\ge A_nrR_\alpha^n$ pour tout $i\in \set{1,\ldots,N_\alpha}$. Ainsi
 $$\vol(V,f,g)\ge N_\alpha A_nrR_\alpha^n,$$ 
et avec l'inégalité du lemme \ref{theointermediareentropieSabourau}, on obtient 
$$h_{\text{\vol}}(V,f,g)\ \sys (V,f,g)\le \dfrac{1}{\beta}\:\log \dfrac{\sigma(V,f,g)}{A_n\alpha^n},$$
ce qui prouve le théorème \ref{theoSabourauIllus1}.


\subsection{Une inégalité liant le volume systolique, l'entropie volumique et le quotient embolique}
Le but de ce paragraphe est de présenter une inégalité
reliant l'entropie volumique minimale, le volume systolique et le quotient embolique
d'une variété compacte. Avant d'énoncer cette inégalité, il est nécessaire
de rappeler  quelques notions. 

Soit $(M,g)$ une variété riemannienne de dimension $n\ge 1$. On considère le rapport\index{Quotient embolique} :
$$\emb(M,g)=\dfrac{\vol(M,g)}{\inj(M,g)^n},$$
où $\inj(M,g)$ désigne le rayon d'injectivité de la variété $(M,g)$, que l'on appellera quotient embolique de $(M,g)$.

Berger a démontré dans \cite{Berger-Emb} l'existence d'une constante $C_n>0$ qui ne dépend que de la dimension de $M$ telle que :
\begin{equation}
\label{embolie}
	\emb(M,g)\ge C_n
\end{equation}
Cela autorise la définition suivante.

\begin{defi}
Soit $M$ une variété (compacte sans bord) de dimension $n$. La \textit{constante embolique}\index{Constante embolique} de $M$ est :
$$\emb(M)=\underset{g}{\inf}\:\emb(M,g),$$
l'infimum étant pris sur toutes les métriques riemanniennes $g$ sur $M$. 
\end{defi}
\begin{rema}
La constante $C_n$ de l'inégalité (\ref{embolie}) est explicite, il s'agit de $\dfrac{\omega_n}{\pi^n}$, où $\omega_n$ désigne le volume de la sphère unité $S^n$ de $\R^{n+1}$. De plus, Berger a démontré qu'il y a égalité dans (\ref{embolie}) si et seulement si $(M,g)$ est la  sphère de dimension $n$ munie de sa métrique canonique $g$.
\end{rema}


On aura besoin dans la suite du résultat suivant, démontré dans \cite{Croke-II}, qui met en jeu le rayon d'injectivité d'une variété compacte (voir \cite{GHL}).

\begin{theo}
\label{InégalitéCroke}
Soit $(M,g)$ est une variété riemannienne compacte sans bord de dimension $n$. Il existe une constante $c_n>0$, qui ne dépend que de la dimension de $M$, telle que les boules $B(R)$ de $M$ de rayon $R\in[0,\tfrac12\:\inj(M,g)]$ vérifient :
\begin{equation}
\label{Croke}
	\vol(B(R))\ge c_nR^n.
\end{equation}
\end{theo}

\begin{rema}
\begin{enumerate}
	\item L'énoncé de ce résultat est à comparé avec le théorème \ref{theo-reg} page \pageref{theo-reg}. Les techniques mises en jeu dans la démonstration utilise bien entendu la courbure, qui n'intervient pas en géométrie systolique.
\item On dispose d'une expression explicite de la constante $c_n$. En désignant par $\omega_n$ le volume de la sphère unité $S^n$ dans $\R^{n+1}$ muni de la métrique canonique, on a 
$$c_n=\dfrac {2^{n-1}\omega_{n-1}^n}{\omega_n^{n-1}n^n}.$$
\end{enumerate}
\end{rema}

On peut maintenant énoncer un résultat qui relie le volume systolique, l'entropie volumique et le quotient embolique d'une variété  compacte.

\begin{theo}
\label{theomoa}
Soit $(M,g)$ une variété riemannienne compacte non simplement connexe.  Pour $\alpha>0$ et $\beta>0$ tels que $\alpha+2\beta<\tfrac 12$, on a :
 $$\omega(M)\le \dfrac {\sigma(M,g)^{\frac 1n}}\beta\:\log \dfrac{\emb(M,g)}{c_n\alpha^n},$$
où $\sigma(M,g)$ est le volume systolique de $(M,g)$ et $c_n$ est la constante définie par le théorème \ref{InégalitéCroke}.
\end{theo}

La preuve du théorème \ref{theomoa} est calquée sur celle du théorème \ref{theoSabourauIllus1}. On a besoin au préalable du lemme suivant (comparer au lemme \ref{theointermediareentropieSabourau}).
 
\begin{lemm}
\label{theointermediareentropiemoa}
Soient $(M,g)$ une variété riemannienne compacte non simplement connexe, $\alpha>0$. Soit $(B_1,\ldots,B_{N\alpha})$ un système maximal de boules ouvertes disjointes de $M$ de rayon 
\begin{equation}
	\label{defR2}
	R_\alpha=\alpha\:\text{inj}(M,g).
\end{equation}
Lorsque $\beta>0$ vérifie $\alpha+2\beta<\tfrac 12$, on a:
 $$h_{\text{\vol}}(M,g)\ \sys(M,g)\le \dfrac{\log N_\alpha}{\beta}.$$
\end{lemm}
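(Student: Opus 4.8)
Le plan est de décalquer la preuve du lemme \ref{theointermediareentropieSabourau}, l'unique ingrédient supplémentaire étant l'inégalité classique $\inj(M,g)\le\tfrac12\sys(M,g)$, valable dès que $M$ n'est pas simplement connexe : si $\gamma$ est une géodésique systolique de longueur $\sys(M,g)$ et $p=\gamma(0)$, les deux arcs de $\gamma$ joignant $p$ au point $\gamma(\tfrac12\sys(M,g))$ sont deux géodésiques distinctes de même longueur $\tfrac12\sys(M,g)$, ce qui empêche $\exp_p$ d'être injective au-delà de ce rayon. Comme le système $(B_1,\ldots,B_{N_\alpha})$ est maximal, les boules concentriques $2B_1,\ldots,2B_{N_\alpha}$, de rayon $2R_\alpha$, recouvrent $M$ ; je noterais $x_i$ le centre de $B_i$ et poserais $m_0=x_1$. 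Puisque $M$ est une variété riemannienne compacte, le théorème \ref{entropiecomptage} s'applique directement, et il suffira de majorer le nombre $P_{m_0}(L)$ de classes d'homotopie de lacets de point base $m_0$ représentables par un lacet de longueur $\le L$.

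Je reprendrais alors la discrétisation de Sabourau. Étant donné un tel lacet $c$ paramétré par la longueur d'arc, je poserais $L_\beta=\beta\,\sys(M,g)$, $m_k=c(kL_\beta)$ et, $m_k$ étant recouvert par une boule $2B_{j_k}=B(x_{j_k},2R_\alpha)$, je noterais $p_k=x_{j_k}$, de sorte que $\dist(m_k,p_k)<2R_\alpha$. En formant les lacets
\[
\alpha_k=c_k\ast[m_{k+1},p_{k+1}]\ast[p_{k+1},p_k]\ast[p_k,m_k]
\]
(où $c_k=c|_{[kL_\beta,(k+1)L_\beta]}$ et où l'on choisit des plus courts chemins cohérents entre centres consécutifs), l'estimation $\dist(p_k,p_{k+1})\le L_\beta+4R_\alpha$ donne $\Long(\alpha_k)\le 2L_\beta+8R_\alpha$. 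C'est ici qu'intervient l'inégalité $\inj\le\tfrac12\sys$ : puisque $R_\alpha=\alpha\,\inj(M,g)\le\tfrac{\alpha}2\sys(M,g)$, on obtient $\Long(\alpha_k)\le(2\beta+4\alpha)\,\sys(M,g)$, quantité strictement inférieure à $\sys(M,g)$ sous l'hypothèse reliant $\alpha$ et $\beta$. Chaque $\alpha_k$ est alors \emph{contractile dans $M$}, par définition même de la systole.

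La contractilité des $\alpha_k$ montre que $c$ est homotope, à point base fixé, au lacet polygonal $c'=[m_0,p_1]\ast[p_1,p_2]\ast\cdots\ast[p_s,m_0]$, entièrement déterminé par le mot $(p_1,\ldots,p_s)$ à valeurs dans les $N_\alpha$ centres, où $s\le L/L_\beta$. Comme deux lacets représentant des classes distinctes de $\pi_1(M,m_0)$ fournissent des mots distincts, j'obtiendrais $P_{m_0}(L)\le N_\alpha^{\,s}$, d'où $\log P_{m_0}(L)\le \tfrac{L}{\beta\,\sys(M,g)}\log N_\alpha$. En divisant par $L$, en faisant tendre $L$ vers $+\infty$ et en invoquant le théorème \ref{entropiecomptage}, il vient $h_{\text{vol}}(M,g)\le \tfrac{\log N_\alpha}{\beta\,\sys(M,g)}$, soit l'inégalité annoncée.

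Le point vraiment délicat est la vérification, via $\inj\le\tfrac12\sys$ et la relation entre $\alpha$ et $\beta$, que $\Long(\alpha_k)<\sys(M,g)$ : c'est cette comptabilité de constantes qui garantit la contractilité des petits lacets, et donc la légitimité du codage de $c$ par le mot $(p_1,\ldots,p_s)$. Tout le reste est un transport mot pour mot de la démonstration du lemme \ref{theointermediareentropieSabourau}, la seule modification de fond étant le remplacement de $\sys(V,f,g)$ par $\inj(M,g)$ dans le rayon des boules du recouvrement.
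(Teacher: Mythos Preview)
Your proposal is correct and follows exactly the approach of the paper: both proofs transport the argument of lemme~\ref{theointermediareentropieSabourau} verbatim, the only additional ingredient being the inequality $\inj(M,g)\le\tfrac12\sys(M,g)$, which the paper justifies by noting that any loop of length at most $2\,\inj(M,g)$ sits in a contractible ball of radius $\inj(M,g)$, while you obtain it from the two arcs of a systolic geodesic. Both arguments lead to the same bound $\Long(\alpha_k)\le 2(\beta+2\alpha)\,\sys(M,g)$ and the same counting conclusion.
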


\begin{proof}[Démonstration du lemme \ref{theointermediareentropiemoa}] 
La preuve suit pas à pas celle du lemme \ref{theointermediareentropieSabourau}. On regarde seulement la contractibilité des lacets dans $M$. Comme tout lacet de longueur $2\:\inj(M,g)$ est contenu dans une boule de rayon $\inj(M,g)$ qui est contractile, car difféomorphe à une boule euclidienne, on a $2\:\inj(M,g)\le \sys(M,g)$. 
On obtient, en gardant les notations  de la démonstration du lemme \ref{theointermediareentropieSabourau}, $\Long(\alpha_k)\le 2(\beta+2\alpha)\:\sys(M,g)$. 
\end{proof}

\begin{proof}[Démonstration du théorème \ref{theomoa}]
Soit $g$ une métrique riemannienne sur $M$. Le théorème \ref{InégalitéCroke} permet de majorer $N_\alpha$. En effet, on a :
$$\vol(M,g)\ge \dsum_{i=1}^{N_\alpha}\vol (B_i)\ge N_\alpha c_nR_\alpha^n.$$
Ainsi $N_\alpha\le \dfrac{\emb(M,g)}{\alpha^nc_n}$, donc, avec le lemme \ref{theointermediareentropiemoa}, il  vient :
$$h_{\text{\vol}}(M,g)\ \sys (M,g)\le \dfrac 1\beta\log \dfrac{\emb(M,g)}{\alpha^nc_n}.$$
En multipliant chacun des membres de cette inégalité par $\vol(M,g)^{\frac 1n}$, on obtient l'inégalité du théorème \ref{theomoa}, en passant à la borne inférieure dans l'inégalité de gauche.
\end{proof}

\begin{rema}
Avec la même technique, Sabourau a prouvé (voir\cite{Sabourau-SVME}), que pour $\alpha+4\beta<\tfrac12$, on a :
$$\omega(M)\le \dfrac{\emb(M)^{\frac 1n}}{\beta}\:\log \dfrac{\emb(M)}{c_n\alpha^n},$$
en prenant $L_\beta=\beta\:\inj(M,g)$ dans (\ref{Lbeta}).
\end{rema}


\bibliographystyle{amsalpha}
\bibliography{biblio_actes_TSG_v5}
\end{document}